	\newcommand{\ftn}[3]{ #1 : #2 \rightarrow #3 }
	\newcommand{\setof}[2]{\ensuremath{\left\{ #1 \: : \: #2 \right\}}}
	\newcommand{\Z}{\ensuremath{\mathbb{Z}}\xspace}
	\newcommand{\C}{\ensuremath{\mathbb{C}}\xspace}
	\newcommand{\N}{\ensuremath{\mathbb{N}}\xspace}
	\newcommand{\spec}{\textnormal{Spec}}
	\newcommand{\prim}{\textnormal{Prime}}
	\newcommand{\alg}{\textnormal{alg}}
	\newcommand{\tops}{\textnormal{top}}
\theoremstyle{plain}
\newtheorem{theorem}{Theorem}[section]
\newtheorem{lemma}[theorem]{Lemma}
\newtheorem{corollary}[theorem]{Corollary}
\newtheorem{proposition}[theorem]{Proposition}
\newtheorem{conjecture}[theorem]{Conjecture}
\theoremstyle{definition}
\newtheorem{assumption}[theorem]{Standing Assumption}
\newtheorem{definition}[theorem]{Definition}
\numberwithin{equation}{section}
\begin{document}

\title{Filtered $K$-theory for graph algebras}
\date{\today}
\author{S\o{}ren Eilers}
\address{Department of Mathe\-matical Sciences, University of Copen\-hagen, Universi\-tets\-park\-en~5, DK-2100 Copen\-hagen, Den\-mark}
\email{eilers@math.ku.dk}
\author{Gunnar Restorff}
\address{Department of Science and Technology, University of the Faroe Islands, N\'{o}at\'{u}n~3, FO-100 T\'{o}rshavn, the Faroe Islands}
\email{gunnarr@setur.fo}
\author{Efren Ruiz}
\address{Department of Mathematics, University of Hawaii, Hilo, 200 W.~Kawili St., Hilo, Hawaii, 96720-4091 USA}
\email{ruize@hawaii.edu}
\author{Adam P.~W.~S{\o}rensen}
\address{Department of Mathematics, University of Oslo, PO BOX 1053 Blindern, N-0316 Oslo, Norway}
\email{apws@math.uio.no}
\keywords{Leavitt path algebra, Graph $C^*$-algebras}
\subjclass[2010]{16B99, 46L05, 46L55}

\begin{abstract}
We introduce filtered algebraic $K$-theory of a ring $R$ relative to a sublattice of ideals.
This is done in such a way that filtered algebraic $K$-theory of a Leavitt path algebra relative to the graded ideals is parallel to the gauge invariant filtered $K$-theory for graph $C^*$-algebras. 
We apply this to verify the Abrams-Tomforde conjecture for a large class of finite graphs.  
\end{abstract}

\maketitle

\section{Introduction}        

Since the inception of Leavitt path algebras in \cite{MR2172342, MR2310414} it has been known that there is a strong connection between Leavitt path algebras and graph $C^*$-algebras. 
In particular many results for both graph $C^*$-algebras and Leavitt path algebras have the same hypotheses when framed in terms of the underlying graph and the conclusions about the structure of the algebras are analogous.
For instance, by \cite[Theorem 4.1]{MR1878636} and \cite[Theorem 4.5]{MR2266860} the following are equivalent for a graph $E$.
\begin{enumerate}
 \item $E$ satisfies Condition (K) (no vertex is the base point of exactly one return path). 
 \item $C^*(E)$ has real rank 0.
 \item $L_\C(E)$ is an exchange ring.
\end{enumerate}
That real rank 0 is the analytic analogue of the algebraic property of being an exchange ring is justified in \cite[Theorem 7.2]{MR1639739}. 

One of the most direct connections we could possibly have between Leavitt path algebras and graph $C^*$-algebras would be: If $E,F$ are graphs then
\[
	L_{\C}(E) \cong L_{\C}(F) \iff C^*(E) \cong C^*(F). 
\]
This is called the \emph{isomorphism question} and it is unknown if it is true.
As currently stated the question is very imprecise, while it is clear what is meant by isomorphism of $C^*$-algebras, we could consider isomorphisms of Leavitt path algebras both as rings, algebras, and $*$-algebras.
In the last case the forward implication of the isomorphism question holds.  
In \cite{MR2775826} Abrams and Tomforde take a systematic look at the isomorphism question and many related questions, for instance whether or not the above holds with Morita equivalence in place of isomorphism. 
They provide evidence in favor of a positive answer to the Morita equivalence question and elevate one direction to a conjecture. 

\begin{conjecture}[The Abrams-Tomforde Conjecture]
Let $E$ and $F$ be graphs.
If $L_\C(E)$ is Morita equivalent to $L_\C(F)$, then $C^*(E)$ is (strongly) Morita equivalent to $C^*(F)$.
\end{conjecture}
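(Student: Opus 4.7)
The plan is to transport a ring Morita equivalence of Leavitt path algebras to an isomorphism of classifying invariants of the corresponding graph $C^*$-algebras, and then invoke a known $C^*$-classification theorem. The new ingredient --- filtered algebraic $K$-theory of a ring relative to a sublattice of ideals --- is designed precisely so that, when applied to $L_\C(E)$ with the lattice of graded ideals, it is both a Morita invariant of the ring and runs parallel to gauge invariant filtered $K$-theory of $C^*(E)$.

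First, for any ring $R$ equipped with a sublattice $\mathcal{L}$ of two-sided ideals, I would construct $\mathrm{FK}^{\alg}_{\mathcal{L}}(R)$ by packaging the algebraic $K$-groups of all subquotients $I/J$ (with $I, J \in \mathcal{L}$) into a system of six-term exact sequences organised by $\mathcal{L}$. Specialising to $R = L_\C(E)$ and $\mathcal{L}$ the graded ideals, which admit an intrinsic characterisation in terms of idempotents and therefore form a sublattice preserved by ring Morita equivalences, the Morita invariance of algebraic $K$-theory yields that $\mathrm{FK}^{\alg}_{\mathrm{gr}}(L_\C(E))$ is invariant under Morita equivalence between Leavitt path algebras.

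Next I would identify $\mathrm{FK}^{\alg}_{\mathrm{gr}}(L_\C(E))$ with the gauge invariant filtered $K$-theory of $C^*(E)$. The two indexing lattices agree canonically, both being in bijection with the admissible pairs of hereditary saturated subsets of $E^0$, and subquotients on either side are again a Leavitt path algebra, respectively a graph $C^*$-algebra, of the same subgraph. The comparison therefore reduces to checking level-wise that $K_n^{\alg}(L_\C(F')) \cong K_n^{\tops}(C^*(F'))$ for finite subgraphs $F'$ in the relevant degrees (known for $n = 0, 1$), and that these isomorphisms are natural in inclusions of ideals so that the six-term exact sequences on the two sides are intertwined.

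Finally I would restrict to a class of finite graphs for which $C^*(E)$ is classifiable by gauge invariant filtered $K$-theory up to strong Morita equivalence --- for example those satisfying Condition (K), whose $C^*$-algebras have real rank zero and fall within the reach of existing Kirchberg--Phillips-type and mixed classification theorems. Combining the previous two steps produces an isomorphism of gauge invariant filtered $K$-theories between $C^*(E)$ and $C^*(F)$, which such a classification upgrades to strong Morita equivalence. The main obstacle is the middle step --- ensuring that the \emph{entire} filtered structure, including the boundary maps of the six-term sequences and naturality in the ideal lattice, is transported correctly between the algebraic and analytic settings, not merely the individual $K$-groups. The reach of the final step is a secondary but quantitative constraint that determines the "large class" of finite graphs for which the conjecture can actually be verified.
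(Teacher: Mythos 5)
Your strategy coincides with the paper's: characterize the graded ideals of a Leavitt path algebra intrinsically as the ideals generated by idempotents (so the graded-ideal lattice is preserved by ring isomorphisms and, after stabilizing the graph, by Morita equivalences), build filtered algebraic $K$-theory over the resulting $\mathcal{S}$-prime spectrum, compare it with the gauge invariant filtered $K$-theory of $C^*(E)$ via the canonical $*$-homomorphism $L_\C(E)\to C^*(E)$, and finish by quoting the classification of graph $C^*$-algebras over finite graphs satisfying Condition (H) (which includes Condition (K)). The one step that is wrong as you state it is the level-wise identification ``$K_n^{\alg}(L_\C(F'))\cong K_n^{\tops}(C^*(F'))$, known for $n=0,1$'': this fails already for $n=1$ and a single-vertex graph, where $K_1^{\alg}(\C)=\C^{\times}$ while $K_1^{\tops}(\C)=0$. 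The comparison map $\gamma_1$ from algebraic to topological $K_1$ of the relevant subquotients is in general only surjective with divisible kernel (the paper reduces to the row-finite case by Drinen--Tomforde desingularization and invokes \cite[Lemma~4.7]{MR3188556}); an isomorphism of invariants is then transported to the topological side because $K_1^{\tops}$ of every subquotient of a graph $C^*$-algebra is torsion free, so an isomorphism of the algebraic $K_1$-groups descends to one of the topological $K_1$-groups compatibly with $\gamma_1$ (\cite[Lemma~4.8]{MR3188556}). Only in degree $0$ is the comparison the honest natural order isomorphism you describe. With that correction, and with the caveat that this verifies the conjecture only on the classifiable class rather than proving it in general, your argument is the paper's.
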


In \cite{MR3188556} the third named author and Tomforde use ideal related algebraic $K$-theory to verify the Abrams-Tomforde conjecture of large classes of graphs. 
They introduce ideal related algebraic $K$-theory as a Leavitt path algebra analogue for filtered $K$-theory for graph $C^*$-algebras. 
This then allows them to prove the Abrams-Tomforde conjecture for all classes of graphs where the associated $C^*$-algebras are classified by filtered $K$-theory. 

The authors have shown in \cite{arXiv:1604.05439v1} that when classifying graph $C^*$-algebras that do not have real rank 0, it can be useful to replace the full filtered $K$-theory with a version that only looks at gauge invariant ideals.
Motivated by this, we develop a version of ideal related algebraic $K$-theory relative to a sublattice of ideals. 
Our goal is to get an ideal related $K$-theory for Leavitt path algebras that only considers graded ideals, but we try to state our result in greater generality. 
We look at a sublattice $\mathcal{S}$ of ideals in some ring $R$ and consider the spectrum of these ideals, that is the set of $\mathcal{S}$-prime ideals. 
This set is equipped with the Jacobson (or hull-kernel) topology. 
In nice cases there exists a lattice isomorphism from the open sets in the spectrum to the ideals in $\mathcal{S}$. 
Specializing to the case of a Leavitt path algebra $L_{\mathsf{k}}(E)$, we show that the spectrum associated to the graded ideals is homeomorphic to the spectrum of gauge invariant ideals in $C^*(E)$. 
Using this we define filtered algebraic $K$-theory of $L_{\mathsf{k}}(E)$ relative to the graded ideals in complete analogy to the $C^*$-algebra definition. 
We then follow the work of \cite{MR3188556} and establish the Abrams-Tomforde conjecture for all graphs where the $C^*$-algebras are classified by filtered $K$-theory of gauge invariant ideals. 
By \cite{arXiv:1604.05439v1} this includes a large class of finite graphs. 
        
\section{Preliminaries}

In this section we set up the notation we will use throughout the paper and we recall the needed definitions. 
We begin with the definitions of graphs, graph $C^*$-algebras and Leavitt path algebras. 

\begin{definition}
A graph $E$ is a quadruple $E = (E^0 , E^1 , r, s)$ where $E^0$ is the set of vertices, $E^1$ is the set of edges, and $r$ and $s$ are maps from $E^1$ to $E^0$ giving the range and source of an edge. 
\end{definition}

\begin{assumption}
Unless explicitly stated otherwise, all graphs are assumed to be countable, i.e., the set of vertices and the set of edges are countable sets.
\end{assumption}

We follow the notation and definition for graph $C^*$-algebras in \cite{MR1670363} and warn the reader that this is not the convention used in the monograph by Raeburn (\cite{MR2135030}). 

\begin{definition} \label{def:graphca}
Let $E = (E^0,E^1,r,s)$ be a graph.
The \emph{graph $C^*$-algebra} $C^*(E)$ is defined to be the universal $C^*$-algebra generated by mutually orthogonal projections $\setof{ p_v }{ v \in E^0 }$ and partial isometries $\setof{ s_e }{ e \in E^1 }$ satisfying the relations
\begin{itemize}
	\item $s_e^* s_f = 0$ if $e,f \in E^1$ and $e \neq f$,
	\item $s_e^* s_e = p_{r(e)}$ for all $e \in E^1$,
	\item $s_e s_e^* \leq p_{s(e)}$ for all $e \in E^1$, and,
	\item $p_v = \sum_{e \in s^{-1}(v)} s_e s_e^*$ for all $v \in E^0$ with $0 < |s^{-1}(v)| < \infty$.
\end{itemize}
\end{definition}

We get our definition of Leavitt path algebras from \cite{MR2172342, MR2310414}. 

\begin{definition}
Let $\mathsf{k}$ be a field and let $E$ be a graph. 
The \emph{Leavitt path algebra} $L_{\mathsf{k}}(E)$ is the universal $\mathsf{k}$-algebra generated by pairwise orthogonal idempotents $\{ v \mid v \in E^0\}$ and elements $\{e, e^* \mid e \in E^1\}$ satisfying 
\begin{itemize}
 \item $e^* f = 0$, if $e \neq f$,
 \item $e^* e = r(e)$, 
 \item $s(e) e = e = e r(e)$, 
 \item $e^* s(e) = e^* = r(e) e^*$, and, 
 \item $v = \sum_{e \in s^{-1}(v)} e e^*$, if $s^{-1}(v)$ is finite and nonempty. \label{item:summation_relation}
\end{itemize}
\end{definition}

Recall that graph $C^*$-algebras come with a natural gauge action and that Leavitt path algebras come with a natural grading. 
We now turn to the ideal structure of Leavitt path algebras and graph $C^*$-algebras, where we are particularly interested in graded ideals and gauge invariant ideals.  

\begin{assumption}
Unless explicitly stated otherwise, all ideals in rings are two-sided ideals and all ideals in a $C^{*}$-algebra are closed two-sided ideals.   
\end{assumption}

\begin{definition}
For any ring $R$ we denote by $\mathbb{I}(R)$ the lattice of ideals in $R$.
\end{definition}

As per usual we write $v \geq w$ if there is a path from the vertex $v$ to the vertex $w$.
We call a subset $H \subseteq E^0$ \emph{hereditary} if $v \in H$ and $v \geq w$ imply that $w \in H$, and we say that $H$ is \emph{saturated} if for every $v \in E^0$ with $0 < |s^{-1}(v)| < \infty$ and $r(s^{-1}(v)) \subseteq H$ we have $v \in H$. 
If $H$ is saturated and hereditary we define 
\[
	B_H = \setof{ v \in E^0 \setminus H}{ |s^{-1}(v)|=\infty \text{ and } 0 < |s^{-1}(v) \cap r^{-1}(E^0 \setminus H)| < \infty}.
\]
In other words, $B_H$ consists of infinite emitters that are not in $H$ and emit a non-zero finite number of edges to vertices not in $H$. 
We say that those vertices are \emph{breaking} for $H$.

\begin{definition}[{\cite[Definition~5.4]{MR2363133}}]
An admissible pair $(H, S)$ consists of a saturated hereditary subset $H$ and a subset $S$ of $B_H$.
We put an order on the set of admissible pairs by letting $(H,S) \leq (H',S')$ if and only if $H \subseteq H'$ and $S \subseteq H' \cup S'$. 
This is in fact a lattice order. 
\end{definition}

\begin{theorem}[{\cite[Theorem~3.6]{MR1988256} and \cite[Theorem~5.7]{MR2363133}}] \label{t:admissiblelattice}
Let $E$ be a graph and let $\mathsf{k}$ be a field.
\begin{itemize}
	\item There is a canonical lattice isomorphism from the set of admissible pairs to the set of gauge invariant ideals of $C^*(E)$. We write $I_{(H,S)}^{\tops}$ for the image of an admissible pair.
	\item There is a canonical lattice isomorphism from the set of admissible pairs to the set of graded ideals of $L_{\mathsf{k}}(E)$. We write $I_{(H,S)}^{\alg}$ for the image of an admissible pair.
\end{itemize} 
\end{theorem}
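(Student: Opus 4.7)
The plan is to construct inverse lattice maps explicitly and verify them in parallel for the two cases, using gauge invariance in $C^*(E)$ and the $\Z$-grading in $L_{\mathsf{k}}(E)$ as analogous organizing structures. Given an admissible pair $(H,S)$, I would set $I_{(H,S)}$ to be the ideal generated by $\setof{p_v}{v \in H}$ together with the ``gap projections'' $p_v - \sum_{e \in s^{-1}(v),\, r(e)\notin H} s_e s_e^*$ for $v \in S$ (and analogously on the Leavitt side using the idempotents $v$ and $ee^*$). Each such generator is a projection (resp.\ an idempotent in degree zero), so $I_{(H,S)}$ is automatically gauge invariant (resp.\ graded). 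Conversely, given such an ideal $I$, I would take $H_I := \setof{v \in E^0}{p_v \in I}$ and let $S_I$ consist of those $v \in B_{H_I}$ whose gap projection (computed with respect to $H_I$) belongs to $I$. The Cuntz-Krieger / Leavitt relations then force $H_I$ to be hereditary (since $p_{r(e)} = s_e^* p_{s(e)} s_e$) and saturated (via the summation relation), while $S_I \subseteq B_{H_I}$ holds by construction.

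That $(H_{I_{(H,S)}}, S_{I_{(H,S)}}) = (H,S)$ reduces to verifying that no additional vertex projection or gap projection accidentally lies in $I_{(H,S)}$. This I would handle via the gauge-invariant uniqueness theorem for $C^*(E)$ (respectively the graded uniqueness theorem for $L_{\mathsf{k}}(E)$), applied after identifying the quotient by $I_{(H,S)}$ with the graph algebra of a suitable quotient graph constructed from $(H,S)$; the universal property then witnesses that the remaining vertex and gap projections are nonzero.

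The main obstacle — and the real content of the theorem — is the reverse inclusion: showing that every gauge invariant (resp.\ graded) ideal $I$ coincides with $I_{(H_I,S_I)}$. The containment $I_{(H_I,S_I)} \subseteq I$ is immediate from the definitions, but for the opposite inclusion I would exploit the faithful conditional expectation onto the fixed-point subalgebra in the $C^*$-case to reduce a general element of $I$ to its diagonal part, or decompose into homogeneous components via the grading in the LPA case, and then apply the relations to express such elements in terms of the designated vertex and gap generators determined by $(H_I,S_I)$. Finally, order preservation and compatibility of the bijection with intersections and sums of ideals is a direct verification on generators, promoting the bijections to lattice isomorphisms; the same generator-level check also shows the $C^*$-algebraic and algebraic pictures are assembled from identical combinatorial data, which is why the labeling by admissible pairs is canonical in both settings.
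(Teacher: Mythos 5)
The paper does not prove this statement; it is imported verbatim from the cited references (Bates--Hong--Raeburn--Szyma\'nski, Theorem~3.6, for the $C^*$-case and Tomforde, Theorem~5.7, for the Leavitt case), and your sketch reproduces exactly the strategy of those proofs: generate $I_{(H,S)}$ by vertex and gap projections, recover $(H_I,S_I)$ from an ideal, and settle both inverse identities via the gauge-invariant (resp.\ graded) uniqueness theorem applied to the quotient graph $E\setminus(H,S)$. Your outline is correct, with the understanding that the genuinely hard step --- the inclusion $I\subseteq I_{(H_I,S_I)}$, where your appeal to the conditional expectation onto the fixed-point algebra is really a rerun of the proof of the uniqueness theorem and requires the more delicate AF-core analysis when breaking vertices are present --- is precisely the content carried by the cited theorems.
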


One of the main reasons the sublattice of graded ideals can be used to study the Morita equivalence classes of Leavitt path algebras is that the graded ideals are preserved by (not necessarily graded) ring isomorphisms. 

\begin{lemma} \label{l:gradeidempotents}
Let $E$ be a graph and let $\mathsf{k}$ be a field.
Suppose $I$ is an ideal in $L_{\mathsf{k}}(E)$.
Then $I$ is graded if and only if $I$ is generated by idempotents. 
\end{lemma}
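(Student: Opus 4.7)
The forward direction follows directly from Theorem~\ref{t:admissiblelattice}. Given a graded ideal $I$, write $I = I^{\alg}_{(H,S)}$ for the corresponding admissible pair. It is standard that $I^{\alg}_{(H,S)}$ is the two-sided ideal generated by $H \cup \{v^H : v \in S\}$, where $v^H := v - \sum_{e \in s^{-1}(v),\, r(e) \notin H} ee^*$. Every vertex is an idempotent by the relation $v \cdot v = v$. For $v \in S$, the elements $ee^*$ occurring in the sum are pairwise orthogonal idempotents (using $e^* f = 0$ for $e \neq f$), and each commutes with $v$ with $v \cdot ee^* = ee^*$ (since $s(e)=v$), so a direct expansion gives $(v^H)^2 = v^H$. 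Hence $I$ is generated by idempotents.

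For the converse, assume $I$ is generated by a family of idempotents $\{p_\lambda\}_\lambda$. Define $H := I \cap E^0$ and $S := \{v \in B_H : v^H \in I\}$. The first step is to verify that $(H,S)$ is admissible: heredity of $H$ follows from $r(e) = e^* \cdot v \cdot e \in I$ whenever $v \in H$ and $e \in s^{-1}(v)$, and saturation follows from the Leavitt relation $v = \sum_{e \in s^{-1}(v)} e\, r(e)\, e^*$ at regular vertices. Put $J := I^{\alg}_{(H,S)}$. The containment $J \subseteq I$ is immediate from the definition of $H$ and $S$, so the whole content lies in showing $I \subseteq J$. Since $I$ is generated by idempotents, this reduces to proving that each $p_\lambda$ has image zero in the quotient $L_{\mathsf{k}}(E)/J$.

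By the standard quotient-graph construction (cf.\ \cite{MR2363133}), the quotient $L_{\mathsf{k}}(E)/J$ is again a Leavitt path algebra $L_{\mathsf{k}}(F)$ for an appropriate graph $F$ built from $E$, $H$, and $S$; the image $\bar I$ of $I$ is an ideal generated by the idempotents $\bar p_\lambda$, and by how $H$ and $S$ were chosen, $\bar I$ contains neither a vertex of $F$ nor a breaking-vertex element. The main obstacle is therefore the claim that an ideal of a Leavitt path algebra which is generated by idempotents and contains no vertex or breaking-vertex element must be the zero ideal.

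My plan for this claim is to apply the Reduction Theorem for Leavitt path algebras (Abrams--Rangaswamy--Siles Molina) to a putative nonzero generator $\bar p_\lambda$: there exist paths $\mu,\nu$ such that $\mu^* \bar p_\lambda \nu$ is either a nonzero scalar multiple of a vertex $w$ of $F$, or a nonzero Laurent polynomial in some cycle $c$ without exits. In the first case $w \in \bar I$, contradicting $H = I \cap E^0$ (respectively the definition of $S$ if $w$ comes from a retained breaking vertex). The genuinely delicate case is the cycle one: here one works in the corner $v L_{\mathsf{k}}(F) v \cong \mathsf{k}[c,c^{-1}]$ at $v := s(c)$, observing that $v \bar I v$ is a nonzero ideal of this PID and exploiting that the only idempotents of $\mathsf{k}[c,c^{-1}]$ are $0$ and $v$ to force $v \in \bar I$, again a contradiction. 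Ruling out this cycle case in a way that genuinely uses the idempotent-generation hypothesis is where I expect most of the real work to be.
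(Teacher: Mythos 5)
Your forward direction is correct and coincides with the paper's. For the converse you take a genuinely different route: you form the candidate graded ideal $J = I^{\alg}_{(H,S)}$ with $H = I\cap E^0$ and $S=\{v\in B_H : v^H\in I\}$, pass to the quotient graph $F$, and try to show that an ideal of $L_{\mathsf{k}}(F)$ generated by idempotents and containing no vertex must vanish. The reduction itself is sound (the admissibility of $(H,S)$ and the inclusion $J\subseteq I$ are fine), but note that the statement you have reduced to is essentially the lemma itself in the case $H=\emptyset$, and the one case carrying all the content --- the cycle-without-exits case of the Reduction Theorem --- is not closed by your sketch. Knowing that $v\bar{I}v$ is a nonzero ideal of $vL_{\mathsf{k}}(F)v\cong \mathsf{k}[c,c^{-1}]$ and that $\mathsf{k}[c,c^{-1}]$ has only trivial idempotents does not force $v\in\bar{I}$: a proper nonzero ideal such as the one generated by $v+c$ contains no nonzero idempotent at all, so there is nothing for the triviality of idempotents to act on. The hypothesis that $\bar{I}$ is generated by idempotents of $L_{\mathsf{k}}(F)$ does not visibly localize to the corner --- the elements $v a \bar{p}_\lambda b v$ spanning $v\bar{I}v$ are not idempotents --- and you explicitly flag that you do not know how to exploit it here. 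As written, the proof is therefore incomplete at its decisive step.

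The paper closes exactly this gap by a different and shorter mechanism: by \cite[Theorem~3.4]{MR3310950}, every idempotent of a Leavitt path algebra is equivalent in $M_\infty(L_{\mathsf{k}}(E))$ to a finite sum of the standard idempotents $v\in E^0$ and $w-\sum_{i=1}^n e_ie_i^*$ (with $w$ an infinite emitter and $e_i\in s^{-1}(w)$), and equivalent idempotents generate the same two-sided ideal; hence each idempotent generator of $I$ may be replaced by standard idempotents, exhibiting $I$ as the ideal of an admissible pair and hence as graded. If you wish to keep your structure, you would need to import an input of comparable strength to settle the cycle case --- either this equivalence result, or the structure theorem for arbitrary ideals of $L_{\mathsf{k}}(E)$ (every ideal is generated by $H\cup\{v^H : v\in S\}$ together with polynomials $f_i(c_i)$ in cycles without exits, and one must show the polynomial part is absent when the generators are idempotents). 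Some external fact about idempotents in Leavitt path algebras appears unavoidable, and your argument currently supplies none.
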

\begin{proof}
Suppose $I$ is graded.  
Then $I = I_{(H,S)}^\alg$ for some admissible pair $(H,S)$.  
By definition (see for instance \cite[Definition~5.5]{MR2363133}) $I_{(H,S)}^\alg$ is generated by $\{ v : v \in H \}$ and 
\[
\{ v - \sum_{ \substack{ s(e) = v \\ r(e) \notin H }} e e^* : v \in S \}.
\] 
Hence $I$ is generated by idempotents.

Suppose instead $I$ is generated by idempotents.
Let $e \in I$ be an idempotent in the generating set $S$ of idempotents for $I$.
By \cite[Theorem~3.4]{MR3310950}, $e$ is equivalent in $M_\infty ( L_\mathsf{k} (G) )$ to a finite sum of the idempotents of the form $v \in E^0$ and $w - \sum_{ i = 1 }^n e_i e_i^*$ where $s(e) = w \in E^0$, $| s^{-1}(w)| = \infty$, and each $e_i$ is an element of $s^{-1}(w)$. 
Then $S_e$ where $e$ is replaced by these new idempotents in the generating set $S$ will generate the ideal $I$.  Thus, $I$ is generated by idempotents in the vertex set and idempotents of the form $v - \sum_{ i = 1 }^n e_i e_i^*$, where $s(e) = v\in E^0$, $|s^{-1}(v) | = \infty$, and each $e_i$ is an element of $s^{-1}(v)$.  
Therefore, $I$ is a graded ideal.  
\end{proof}

Finally we recall from \cite[Section 3]{arXiv:1604.05439v1} the definition of $\prim_\gamma ( C^*(E) )$ and $\mathsf{FK}^{\tops, +} ( \spec_\gamma (C^*(E)) ; C^*(E) )$.

\begin{definition}
Let $E = (E^0 , E^1 , r , s )$ be a graph. 
Let $\prim_\gamma(C^*(E))$ denote the set of all proper ideals that are prime within the set of proper gauge invariant ideals. 
\end{definition}

We give $\prim_\gamma(C^*(E))$ the Jacobson topology and can then show that $C^*(E)$ has a canonical structure as a $\prim_\gamma(C^*(E))$-algebra.
So when $E$ has finitely many vertices --- or, more generally, $\prim_\gamma(C^*(E))$ is finite --- we can consider the reduced filtered ordered K-theory of $C^*(E)$: $\mathsf{FK}^{\tops, +} ( \spec_\gamma (C^*(E)) ; C^*(E) )$.
Loosely speaking this is the collection of the $K$-groups associated to certain subquotients $I/J$ of gauge invariant ideals $I,J$ in $C^*(E)$ together with certain maps of the associated six-term exact sequences. 

\section{$\mathcal{S}$-Prime spectrum for a ring} \label{s:primespectrum}

We will now introduce the Prime-spectrum of a ring relative to a sublattice of ideals. 
Our primary motivation is to look at prime graded ideals in Leavitt path algebras. 

\begin{definition}
Let $R$ be a ring and let $\mathcal{S}$ be a sublattice of $\mathbb{I}(R)$ containing the trivial ideals $\{ 0 \}$ and $R$.
An ideal $P \in \mathcal{S}$ is called \emph{$\mathcal{S}$-prime} if $P \neq R$ and for any ideals $I, J \in \mathcal{S}$,
\begin{align*}
	IJ \subseteq P \quad \implies \quad I \subseteq P \quad \text{or} \quad J \subseteq P.
\end{align*}    
We denote by $\spec_\mathcal{S}(R)$ the set of all $\mathcal{S}$-prime ideals of $R$.
\end{definition}

We note that if $P$ is $\mathcal{S}$-prime and $I,J$ are in $\mathcal{S}$ then $IJ \subseteq I \cap J$ so we have
\[
  I \cap J \subseteq P \quad  \implies \quad I \subseteq P \quad \text{or} \quad J \subseteq P.
\]
We will equip $\spec_\mathcal{S}(R)$ with the Jacobson (or hull-kernel) topology.
For each subset $T \subseteq \spec_\mathcal{S}(R)$ we define the kernel of $T$ as 
\[
  \ker(T) = \bigcap_{\mathfrak{p} \in T} \mathfrak{p}
\]
and the closure of $T$ as
\begin{align} \label{eq:def-closure}
  \overline{T} = \setof{ \mathfrak{p} \in \spec_\mathcal{S}(R) }{ \mathfrak{p} \supseteq \ker(T) }.
\end{align}
Note that if $R$ is a commutative ring and $\mathcal{S} = \mathbb{I} (R)$, then $\spec_\mathcal{S} (R)$ is the spectrum of $R$ with the Zariski topology.

\begin{lemma} \label{l:closure}
Let $R$ be a ring and let $\mathcal{S}$ be a sublattice of $\mathbb{I}(R)$ closed under arbitrary intersections and containing the trivial ideals $\{ 0 \}$ and $R$.
The closure operation defined in (\ref{eq:def-closure}) satisfies the Kuratowski closure axioms, that is
\begin{enumerate}
 \item $\overline{\emptyset} = \emptyset$, \label{ei:closure-empty} 
 \item $T \subseteq \overline{T}$, for all $T \subseteq \spec_\mathcal{S}(R)$, \label{ei:closure-subset}
 \item $\overline{T} = \overline{\overline{T}}$, for all $T \subseteq \spec_\mathcal{S}(R)$, and, \label{ei:closure-idempotent}
 \item $\overline{T_1 \cup T_2} = \overline{T_1} \cup \overline{T_2}$, for all $T_1, T_2 \subseteq \spec_\mathcal{S}(R)$. \label{ei:closure-union}
\end{enumerate}
\end{lemma}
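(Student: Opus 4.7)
The plan is to verify the four axioms in turn, working from the definitions. Axioms (\ref{ei:closure-empty}), (\ref{ei:closure-subset}) and (\ref{ei:closure-idempotent}) should be essentially formal, and the real work will be in (\ref{ei:closure-union}), where both the $\mathcal{S}$-prime property of points and the hypothesis that $\mathcal{S}$ is closed under arbitrary intersections will be needed.

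For (\ref{ei:closure-empty}), I would use the convention that $\ker(\emptyset) = \bigcap_{\mathfrak{p} \in \emptyset} \mathfrak{p} = R$; since every $\mathfrak{p} \in \spec_\mathcal{S}(R)$ is by definition a proper ideal, no such $\mathfrak{p}$ contains $R$, and thus $\overline{\emptyset} = \emptyset$. For (\ref{ei:closure-subset}), if $\mathfrak{p} \in T$ then trivially $\mathfrak{p} \supseteq \bigcap_{\mathfrak{q} \in T} \mathfrak{q} = \ker(T)$, so $\mathfrak{p} \in \overline{T}$.

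For (\ref{ei:closure-idempotent}), I would show that $\ker(\overline{T}) = \ker(T)$, from which the statement is immediate. The inclusion $\ker(\overline{T}) \subseteq \ker(T)$ follows from (\ref{ei:closure-subset}) (a larger index set gives a smaller intersection), while the reverse inclusion follows because every $\mathfrak{p} \in \overline{T}$ satisfies $\mathfrak{p} \supseteq \ker(T)$ by definition, so the intersection of all such $\mathfrak{p}$ also contains $\ker(T)$.

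Axiom (\ref{ei:closure-union}) is the main step. The inclusion $\overline{T_1} \cup \overline{T_2} \subseteq \overline{T_1 \cup T_2}$ is easy: since $\ker(T_1 \cup T_2) = \ker(T_1) \cap \ker(T_2)$, any ideal containing either $\ker(T_i)$ contains the intersection. For the reverse inclusion, let $\mathfrak{p} \in \overline{T_1 \cup T_2}$, so $\mathfrak{p} \supseteq \ker(T_1) \cap \ker(T_2)$. Here I would invoke the assumption that $\mathcal{S}$ is closed under arbitrary intersections to conclude that both $\ker(T_1)$ and $\ker(T_2)$ lie in $\mathcal{S}$ (taking the convention $\ker(\emptyset) = R \in \mathcal{S}$ for the degenerate case). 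The observation immediately following the definition of $\mathcal{S}$-prime then applies: since $\mathfrak{p}$ is $\mathcal{S}$-prime and $\ker(T_1), \ker(T_2) \in \mathcal{S}$, the containment $\ker(T_1) \cap \ker(T_2) \subseteq \mathfrak{p}$ forces $\ker(T_1) \subseteq \mathfrak{p}$ or $\ker(T_2) \subseteq \mathfrak{p}$, i.e.\ $\mathfrak{p} \in \overline{T_1} \cup \overline{T_2}$. The only subtlety I foresee is bookkeeping for the empty case in axioms (\ref{ei:closure-empty}) and (\ref{ei:closure-union}), which is handled cleanly by the convention that an empty intersection of ideals equals $R$.
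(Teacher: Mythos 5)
Your proof is correct and follows essentially the same route as the paper's: the first three axioms are handled formally (including the observation $\ker(\overline{T})=\ker(T)$ for idempotence), and the union axiom rests on $\ker(T_1\cup T_2)=\ker(T_1)\cap\ker(T_2)$ together with the $\mathcal{S}$-prime property applied to $\ker(T_1),\ker(T_2)\in\mathcal{S}$. If anything, you are slightly more explicit than the paper in flagging exactly where closure under arbitrary intersections and the remark following the definition of $\mathcal{S}$-primeness are used.
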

\begin{proof}
Once we recall that by definition $\ker(\emptyset) = R$ it is clear that \ref{ei:closure-empty}. holds and since we have $\mathfrak{p} \supseteq \ker(T)$ for all $\mathfrak{p} \in T$, \ref{ei:closure-subset}. also holds. 
For \ref{ei:closure-idempotent}. we observe that $\ker(\overline{T}) = \ker(T)$, and then clearly $\overline{\overline{T}} = \overline{T}$. 

Finally suppose that $T_1, T_2 \subseteq \spec_\mathcal{S}(R)$. Since $\ker(T_1 \cup T_2) = \ker(T_1) \cap \ker(T_2)$ we have that
\begin{align*}
\overline{T_1 \cup T_2}
		&= \setof{ \mathfrak{p} \in \spec_\mathcal{S}(R) }{ \mathfrak{p} \supseteq \ker(T_1 \cup T_2)} \\
		&= \setof{ \mathfrak{p} \in \spec_\mathcal{S}(R) }{ \mathfrak{p} \supseteq \ker(T_1) \cap \ker(T_2)} \\
		&= \setof{ \mathfrak{p} \in \spec_\mathcal{S}(R) }{ \mathfrak{p} \supseteq \ker(T_1) \text{ or } \mathfrak{p} \supseteq \ker(T_2)} \\
		&= \overline{T_1} \cup \overline{T_2}. 
\end{align*}
So \ref{ei:closure-union}. holds.
\end{proof}

We now describe the open sets in the Jacobson topology. 
To this end we define for each $I \in \mathcal{S}$ the set 
\[
 W(I) = \setof{ \mathfrak{p} \in \spec_\mathcal{S}(R) }{ \mathfrak{p} \nsupseteq I }.
\]

\begin{lemma} \label{l:opensets}
Let $R$ be a ring and let $\mathcal{S}$ be a sublattice of $\mathbb{I}(R)$ closed under arbitrary intersections and containing the trivial ideals $\{ 0 \}$ and $R$.  Then for all $U \subseteq \spec_\mathcal{S}(R)$, $U$ is open if and only if 
\[
  U = W(\ker(U^c)).
\]
Furthermore, if $I \in \mathcal{S}$ is such that 
\[
  I = \ker(\setof{\mathfrak{p} \in \spec_\mathcal{S}(R)}{\mathfrak{p} \supseteq I}),
\]
then $W(I)$ is open. 
\end{lemma}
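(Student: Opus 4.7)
The plan is to reduce both statements to the definition of the closure operator via $\ker$, and use Lemma~\ref{l:closure} only in the trivial direction that $T \subseteq \overline{T}$.

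For the first part, I would start by observing that $U \subseteq \spec_\mathcal{S}(R)$ is open precisely when $U^c$ is closed, which in the Jacobson topology means $\overline{U^c} = U^c$. Since $U^c \subseteq \overline{U^c}$ always holds by Lemma~\ref{l:closure}\ref{ei:closure-subset}, I only need the reverse inclusion. Unpacking the definition,
\[
\overline{U^c} = \setof{\mathfrak{p} \in \spec_\mathcal{S}(R)}{\mathfrak{p} \supseteq \ker(U^c)} = W(\ker(U^c))^c,
\]
so the condition $\overline{U^c} = U^c$ is equivalent (by passing to complements) to $U = W(\ker(U^c))$. This gives both directions of the first claim simultaneously.

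For the second part, I would simply apply the characterization just proved. Set $U = W(I)$, so that
\[
U^c = \setof{\mathfrak{p} \in \spec_\mathcal{S}(R)}{\mathfrak{p} \supseteq I}.
\]
By the hypothesis on $I$ we have $\ker(U^c) = I$, hence $W(\ker(U^c)) = W(I) = U$, and the first part shows that $U$ is open.

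I do not expect any genuine obstacle here; the proof is a straightforward unfolding of the definitions of $\ker$, $W(\cdot)$, and the closure operator. The only thing to be careful about is noting that membership in $\mathcal{S}$ is preserved for $\ker(U^c)$ via the standing assumption that $\mathcal{S}$ is closed under arbitrary intersections, so that $W(\ker(U^c))$ is even meaningful.
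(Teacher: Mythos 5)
Your proof is correct and follows essentially the same route as the paper: both parts are a direct unfolding of the definitions of $\ker$, $W(\cdot)$, and the closure operator. The only cosmetic difference is in the second part, where you verify the fixed-point identity $W(I)=W(\ker(W(I)^c))$ and invoke the first part, while the paper instead exhibits $W(I)^c$ directly as the closure $\overline{\setof{\mathfrak{p}}{\mathfrak{p}\supseteq I}}$; these amount to the same computation.
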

\begin{proof}
Let $U$ be a subset of $\spec_\mathcal{S}(R)$.  Then $U$ is open if and only if $U^c = \overline{U^c}$ if and only if  
\[
  U^c = \setof{ \mathfrak{p} \in \spec_\mathcal{S}(R) }{ \mathfrak{p} \supseteq \ker(U^c) }
\]
if and only if 
\[
  U = \setof{ \mathfrak{p} \in \spec_\mathcal{S}(R) }{ \mathfrak{p} \nsupseteq \ker(U^c) } = W(\ker(U^c)). 
\]
Let now $I \in \mathcal{S}$ be such that 
\[
  I = \ker(\setof{\mathfrak{p} \in \spec_\mathcal{S}(R)}{\mathfrak{p} \supseteq I}).
\]
To ease notation we let $H = \setof{\mathfrak{p} \in \spec_\mathcal{S}(R)}{\mathfrak{p} \supseteq I}$, so that $I = \ker(H)$.
Then 
\begin{align*}
  W(I)^c	&= \setof{ \mathfrak{p} \in \spec_\mathcal{S}(R) }{ \mathfrak{p} \nsupseteq I }^c
		= \setof{ \mathfrak{p} \in \spec_\mathcal{S}(R)}{\mathfrak{p} \nsupseteq \ker(H)}^c \\
		&= \setof{ \mathfrak{p} \in \spec_\mathcal{S}(R)}{\mathfrak{p} \supseteq \ker(H)}
		= \overline{H}. 
\end{align*}
Hence $W(I)$ is open. 
\end{proof}

We now define a lattice isomorphism between the open sets of $\spec_\mathcal{S}(R)$ and the elements of $\mathcal{S}$. 

\begin{theorem}\label{t:latticeisofullring}
Let $R$ be a ring and let $\mathcal{S}$ be a sublattice of $\mathbb{I}(R)$ closed under arbitrary intersections and containing the trivial ideals $\{ 0 \}$ and $R$.
Suppose that for each $I \in \mathcal{S}$ we have that 
\begin{align*}
  I = \ker(\setof{\mathfrak{p} \in \spec_\mathcal{S}(R)}{\mathfrak{p} \supseteq I}).
\end{align*}
Define $\ftn{ \phi }{ \mathbb{O} ( \spec_\mathcal{S} ( R ) ) }{ \mathcal{S} }$ by 
\begin{align*}
\phi ( U ) = \ker(U^c).
\end{align*}
Then $\phi$ is a lattice isomorphism.
\end{theorem}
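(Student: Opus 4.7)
The plan is to exhibit an explicit inverse and then promote the resulting order-preserving bijection to a lattice isomorphism. Define $\ftn{\psi}{\mathcal{S}}{\mathbb{O}(\spec_\mathcal{S}(R))}$ by $\psi(I) = W(I)$. By the standing hypothesis every $I \in \mathcal{S}$ satisfies $I = \ker(\{\mathfrak{p} : \mathfrak{p} \supseteq I\})$, so the second half of Lemma \ref{l:opensets} guarantees that $\psi(I)$ is indeed open; thus $\psi$ takes values in $\mathbb{O}(\spec_\mathcal{S}(R))$.

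Next I would verify that $\phi$ and $\psi$ are mutually inverse. For $I \in \mathcal{S}$, a direct computation gives
\[
\phi(\psi(I)) = \ker(W(I)^c) = \ker\bigl(\{\mathfrak{p} \in \spec_\mathcal{S}(R) : \mathfrak{p} \supseteq I\}\bigr) = I,
\]
where the last equality is exactly the standing hypothesis. Conversely, for an open set $U \subseteq \spec_\mathcal{S}(R)$, Lemma \ref{l:opensets} shows that $U = W(\ker(U^c))$, that is $\psi(\phi(U)) = U$. Hence $\phi$ is a bijection with inverse $\psi$.

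It then remains to check that $\phi$ (equivalently $\psi$) is order-preserving with respect to inclusion. If $U_1 \subseteq U_2$ are open, then $U_1^c \supseteq U_2^c$, and intersecting over a larger index set produces a smaller ideal, so $\phi(U_1) = \ker(U_1^c) \subseteq \ker(U_2^c) = \phi(U_2)$. Symmetrically, if $I_1 \subseteq I_2$ in $\mathcal{S}$, then every $\mathfrak{p}$ containing $I_2$ also contains $I_1$, whence $W(I_1) \subseteq W(I_2)$. An order-isomorphism between lattices automatically preserves the lattice operations (joins and meets can be characterized purely in terms of the order), so $\phi$ is a lattice isomorphism.

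The only real content is the bijectivity, and the apparent obstacle --- namely that the formula $\phi(U) = \ker(U^c)$ is only well-defined on the image of $\psi$ --- dissolves once one invokes the hypothesis that every $I \in \mathcal{S}$ is recovered as the intersection of the $\mathcal{S}$-prime ideals above it. Without that hypothesis $\phi$ would still be surjective onto the fixed points of $I \mapsto \ker(\{\mathfrak{p} \supseteq I\})$ but not onto all of $\mathcal{S}$, so this is the single place where all the work gets done.
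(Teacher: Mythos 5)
Your proposal is correct and follows essentially the same route as the paper: exhibit $I \mapsto W(I)$ as a two-sided inverse (using the second half of Lemma~\ref{l:opensets} for openness and the recovery hypothesis $I = \ker(\setof{\mathfrak{p}}{\mathfrak{p} \supseteq I})$ for one composite, the first half for the other), then check that both maps are order-preserving. Your closing observation that an order isomorphism between lattices automatically respects joins and meets is the same implicit step the paper takes, so there is nothing to add.
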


\begin{proof}
To show that $\phi$ is bijective we define $\ftn{ \gamma }{ \mathcal{S} } { \mathbb{O} ( \spec_\mathcal{S} ( R ) ) }$ by $\gamma ( I ) = W(I)$ and check that it is an inverse. 
Note that by Lemma \ref{l:opensets} the set $W(I)$ is in fact open.
For each $I \in \mathcal{S}$ we have  
\begin{align*}
  \phi(\gamma(I)) &= \phi(W(I)) = \ker(W(I)^c) = \ker(\setof{\mathfrak{p} \in \spec_\mathcal{S}( R )}{\mathfrak{p} \supseteq I}) = I,
\end{align*}
by the assumption on $I$. 
On the other hand, if $U \subseteq \spec_\mathcal{S}( R )$ is open we can use Lemma \ref{l:opensets} to get
\begin{align*}
  \gamma(\phi(U)) = \gamma(\ker(U^c)) = W(\ker(U^c)) = U. 
\end{align*}
Hence $\phi$ is bijective.
To show that $\phi$ is a lattice isomorphism it only remains to verify that both $\phi$ and $\gamma$ preserves order. 
Let $U,V$ be open subsets of $\spec_\mathcal{S}(R)$ with $U \subseteq V$.
Then $V^c \subseteq U^c$ so 
\begin{align*}
  \phi(U) = \ker(U^c) \subseteq \ker(V^c) = \phi(V),
\end{align*}
and hence $\phi$ is order preserving. 
Let now $I,J \in \mathcal{S}$ be such that $I \subseteq J$.  
Then 
\begin{align*}
  W(I)^c	= \setof{\mathfrak{p} \in \spec_\mathcal{S}(R)}{\mathfrak{p} \supseteq I} 
		\supseteq  \setof{\mathfrak{p} \in \spec_\mathcal{S}(R)}{\mathfrak{p} \supseteq J}
		= W(J)^c,
\end{align*}
which implies that $\gamma(I) = W(I) \subseteq W(J) = \gamma(J)$, i.e., $\gamma$ is order preserving. 
\end{proof}

In keeping with the notation from $C^*$-algebras we define $R[U] = \phi(U)$ for every $U \in \mathbb{O}(\prim_\mathcal{S}(R) )$. 
Whenever we have open sets $V \subseteq U$ we can form the quotient $R[U] / R[V]$. 
The next lemma shows that the quotient $R[U] / R[V]$ only depends on the set difference $U \setminus V$ up to canonical isomorphism. 

\begin{lemma}\label{l:subquot}
Let $R$ be a ring and let $\mathcal{S}$ be a sublattice of $\mathbb{I}(R)$ closed under arbitrary intersections and containing the trivial ideals $\{ 0 \}$ and $R$.
Suppose that for each $I \in \mathcal{S}$ we have that 
\begin{align*}
  I = \ker(\setof{\mathfrak{p} \in \spec_\mathcal{S}(R)}{\mathfrak{p} \supseteq I}).
\end{align*}
Then for all $U,V \in \mathbb{O}(\spec_\mathcal{S}(R) )$ we have
\begin{align*}
R [ U \cup V ] = R [ U ] + R [ V ] \quad \text{and} \quad R [ U \cap V ] = R [ U ] \cap R [ V ].
\end{align*}
Consequently, if $V_{1}, V_{2} , U_{1} , U_{2} \in \mathbb{O}( \spec_\mathcal{S}(R) )$  are such that $V_1 \subseteq U_1$, $V_2 \subseteq U_2$, and $U_{1} \setminus V_{1} = U_{2} \setminus V_{2}$, then there exits an isomorphism from $R [ U_{1} ] / R[ V_{1} ]$ to $R [ U_{2} ] / R [ V_{2} ]$ and this isomorphism is natural, i.e., if also $V_3,U_3\in \mathbb{O}(\spec_\mathcal{S}(R) )$ with $V_3\subseteq U_3$ and $U_3\setminus V_3=U_1 \setminus V_1$, then the composition of the isomorphisms from $R[U_1]/R[V_1]$ to $R[U_2]/R[V_2]$ and from $R[U_2]/R[V_2]$ to $R[U_3]/R[V_3]$ is equal to the isomorphism from $R[U_1]/R[V_1]$ to $R[U_3]/R[V_3]$.
\end{lemma}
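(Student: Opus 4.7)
The plan has two stages, one lattice-theoretic and one relying on the second isomorphism theorem for rings. For the first two identities I would invoke Theorem \ref{t:latticeisofullring}: it presents $\phi$ as a lattice isomorphism $\mathbb{O}(\spec_\mathcal{S}(R)) \to \mathcal{S}$. In $\mathbb{O}(\spec_\mathcal{S}(R))$, joins are unions and meets are intersections; in the sublattice $\mathcal{S} \subseteq \mathbb{I}(R)$, joins are sums of ideals and meets are intersections. Applying the lattice isomorphism to $U \cup V$ and $U \cap V$ therefore produces $R[U] + R[V]$ and $R[U] \cap R[V]$ respectively.

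For the subquotient claim, I would first extract the purely set-theoretic consequences of $U_1 \setminus V_1 = U_2 \setminus V_2$, namely
\[
U_1 \cup V_2 = U_2 \cup V_1, \qquad U_1 \cap V_2 \subseteq V_1, \qquad U_2 \cap V_1 \subseteq V_2.
\]
Combining these with the two identities just proved yields $R[U_1] + R[V_1 \cup V_2] = R[U_1 \cup V_1 \cup V_2] = R[U_1 \cup V_2]$ and $R[U_1] \cap R[V_1 \cup V_2] = R[U_1 \cap (V_1 \cup V_2)] = R[V_1 \cup (U_1 \cap V_2)] = R[V_1]$. The second isomorphism theorem (with both $R[U_1]$ and $R[V_1 \cup V_2]$ being two-sided ideals of $R$, hence of $R[U_1 \cup V_2]$) then gives
\[
R[U_1]/R[V_1] \cong R[U_1 \cup V_2]/R[V_1 \cup V_2].
\]
A symmetric argument delivers $R[U_2]/R[V_2] \cong R[U_2 \cup V_1]/R[V_2 \cup V_1]$. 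Since $U_1 \cup V_2 = U_2 \cup V_1$, the two right-hand sides are literally the same ring, and composing one isomorphism with the inverse of the other produces the desired isomorphism $R[U_1]/R[V_1] \to R[U_2]/R[V_2]$.

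For the naturality assertion, given the third pair $(U_3, V_3)$ with $U_3 \setminus V_3 = U_1 \setminus V_1$, set $T = U_1 \cup V_2 \cup V_3$ and $W = V_1 \cup V_2 \cup V_3$. The same set-theoretic reasoning shows $T = U_i \cup W$ and $U_i \cap W = V_i$ for each $i \in \{1,2,3\}$, so the second isomorphism theorem furnishes three isomorphisms $\alpha_i : R[U_i]/R[V_i] \to R[T]/R[W]$. A short uniqueness-of-lift argument, exploiting that $R[V_i \cup V_j] \subseteq R[W]$, shows that each pairwise isomorphism constructed above coincides with $\alpha_j^{-1} \circ \alpha_i$; the cocycle identity $\phi_{13} = \phi_{23} \circ \phi_{12}$ is then immediate from $\alpha_3^{-1} \circ \alpha_1 = (\alpha_3^{-1} \circ \alpha_2) \circ (\alpha_2^{-1} \circ \alpha_1)$.

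The main obstacle is the compatibility check in the naturality step: one has to verify that the isomorphism constructed through the smaller intermediate quotient $R[U_i \cup V_j]/R[V_i \cup V_j]$ really does factor through the larger common quotient $R[T]/R[W]$. Once the inclusions $U_i \cap V_j \subseteq V_i$ are in hand for every $i \neq j$, this comes down to keeping careful track of which representatives live in which ideal, but no new ideas are needed beyond the second isomorphism theorem and the lattice identities from the first part.
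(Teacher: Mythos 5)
Your proposal is correct and takes essentially the same route as the paper: the two identities follow from the lattice isomorphism of Theorem~\ref{t:latticeisofullring}, the subquotient isomorphism is obtained from Noether's second isomorphism theorem via the common intermediate quotient $R[U_1\cup V_2]/R[V_1\cup V_2] = (R[U_2]+R[V_1])/(R[V_1]+R[V_2])$, and naturality is verified by factoring all three pairwise isomorphisms through $R[U_1\cup V_2\cup V_3]/R[V_1\cup V_2\cup V_3]$ using the identities $U_i\cap(V_1\cup V_2\cup V_3)=V_i$. If anything, your handling of the naturality step is more explicit than the paper's one-line appeal to Noether's isomorphism theorem.
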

\begin{proof}
The first part of the theorem follows from the fact that $\phi$ is a lattice isomorphism (Theorem \ref{t:latticeisofullring}) and that $\mathcal{S}$ is a sublattice.  

Suppose now $V_{1}, V_{2} , U_{1} , U_{2} \in \mathbb{O}(X)$ are as in the statement of the Lemma. 
Then $V_{1} \cup U_{2} = U_{1} \cup U_{2} = U_{1} \cup V_{2}$ and therefore
\begin{align*}
R [ U_{2} ] + R [ V_{1} ] = R [ V_{1} \cup U_{2} ] = R [ U_{1} \cup V_{2} ] = R [ U_{1} ] + R [ V_{2} ].
\end{align*}
Since $U_2 \cap (V_1 \cup V_2) = V_2$ we get
\begin{align*}
( R [ U_{2} ] + R [ V_{1} ]  ) / ( R [ V_{1} ] + R[V_{2}] ) &\cong R [ U_{2} ] / (R [ U_{2} ] \cap R[ V_{1} \cup V_{2} ])  \\
	&= R [ U_{2} ] / R [ U_{2} \cap ( V_{1} \cup V_{2} ) ] \\
	&= R [ U_{2} ] / R [ V_{2} ].
\end{align*}
Similarly
\begin{align*}
( R [ U_{1} ] + R [ V_{2} ]  ) / ( R [ V_{1} ] + R[V_{2}] ) \cong R[ U_{1} ] / R[ V_{1} ].
\end{align*}
Hence 
\begin{align*}
R[ U_{1} ] / R[ V_{1} ] &\cong ( R [ U_{1} ] + R [ V_{2} ]  ) / ( R [ V_{1} ] + R[V_{2}] ) \\
			&= ( R [ U_{2} ] + R [ V_{1} ]  ) / ( R [ V_{1} ] + R[V_{2}] ) \\
			& \cong R [ U_{2} ] / R [ V_{2} ].
\end{align*}

Suppose that we also have $V_3,U_3\in \mathbb{O}(\spec_\mathcal{S}(R) )$ with $V_3\subseteq U_3$ and $U_3\setminus V_3=U_1 \setminus V_1$.  Then 
\begin{align*}
V_1 \cup U_2 = U_1 \cup U_2 = U_1 \cup V_2, \\
V_2 \cup U_3 = U_2 \cup U_3 = U_2 \cup V_3, \\
V_1 \cup U_3 = U_1 \cup U_3 = U_1 \cup V_3, 
\end{align*} 
\begin{align*}
V_1 &= U_1 \cap ( V_1 \cup V_2 ) = U_1 \cap ( V_1 \cup V_3 ) = U_1 \cap ( V_1 \cup V_2 \cup V_3 ),  \\
V_2 &= U_2 \cap ( V_1 \cup V_2 ) = U_2 \cap ( V_2 \cup V_3 ) = U_2 \cap ( V_1 \cup V_2 \cup V_3 ),  \text{ and} \\
V_3 &= U_3 \cap ( V_1 \cup V_3 ) = U_3 \cap ( V_2 \cup V_3 ) = U_3 \cap ( V_1 \cup V_2 \cup V_3 ).
\end{align*}
Now, by considering the isomorphism constructed above, one then gets that the isomorphism is natural from Noether's isomorphism theorem. 
\end{proof}

\begin{definition}
Let $X$ be a topological space and let $Y$ be a subset of $X$. 
We call $Y$ \emph{locally closed} if $Y = U \setminus V$ where $U , V \in \mathbb{O} ( X )$ with $V \subseteq U$.
We let $\mathbb{LC}(X)$ be the set of locally closed subsets of $X$.
\end{definition}

\begin{definition}
Let $R$ be a ring and let $\mathcal{S}$ be a sublattice of $\mathbb{I}(R)$ closed under arbitrary intersections and containing the trivial ideals $\{ 0 \}$ and $R$.
Suppose that for each $I \in \mathcal{S}$ we have that 
\begin{align*}
  I = \ker(\setof{\mathfrak{p} \in \spec_\mathcal{S}(R)}{\mathfrak{p} \supseteq I}).
\end{align*}
For $Y = U \setminus V \in \mathbb{LC}( \spec_\mathcal{S}(R) )$, define
\begin{align*}
R [ Y ] := R [ U ] / R [ V ].
\end{align*}
By Lemma~\ref{l:subquot}, $R[Y]$ does not depend on $U$ and $V$ up to a canonical choice of isomorphism.
\end{definition}

\section{$\spec_\gamma ( L_\mathsf{k} (E) )$ and $\prim_\gamma ( C^*(E) )$}

Having set up our notion of primitive ideal spectrum relative to a sublattice, we will now apply it to the graded ideals of Leavitt path algebras. 

\begin{definition}
Let $E$ be a graph and let $\mathsf{k}$ be a field.
We denote by $\mathbb{I}_\gamma ( L_\mathsf{k}(E) )$ the sublattice of $\mathbb{I}( L_\mathsf{k}(E) )$ consisting of all graded ideals of $L_\mathsf{k}(E)$ and for brevity we let $\spec_\gamma ( L_\mathsf{k} (E) ) = \spec_{\mathbb{I}_\gamma(L_\mathsf{k}(E))}(L_\mathsf{k}(E))$.

Similarly we let $\mathbb{I}_\gamma ( C^*(E) )$ be the sublattice of $\mathbb{I}( C^*(E) )$ consisting of all gauge invariant ideals of $C^*(E)$.
\end{definition}

Recall from \cite[Section 3]{arXiv:1604.05439v1} that  $\prim_\gamma ( C^{*} (E) )$ denotes the collection of prime gauge invariant ideals of $C^*(E)$. 
We first prove that the lattice of graded ideals and the lattice of gauge invariant ideals are isomorphic in a canonical way. 

\begin{lemma} \label{l:isogradedgauge}
Let $E$ be a graph. 
The map $\ftn{ \beta }{ \mathbb{I}_\gamma ( L_\mathsf{k}(E) ) }{ \mathbb{I}_\gamma ( C^*(E) ) }$ that is given by $\beta(I^{\alg}_{(H,S)}) = I^{\tops}_{(H,S)}$ is a lattice isomorphism. 
Furthermore $\beta$ maps $\spec_\gamma ( L_\mathsf{k} (E) )$ bijectively onto $\prim_\gamma ( C^{*} (E) )$.
\end{lemma}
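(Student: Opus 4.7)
The plan is to reduce everything to lattice theory using Theorem~\ref{t:admissiblelattice}. The first assertion is immediate from that theorem: it supplies canonical lattice isomorphisms from the admissible-pair lattice onto $\mathbb{I}_\gamma(L_\mathsf{k}(E))$ via $(H,S) \mapsto I^{\alg}_{(H,S)}$ and onto $\mathbb{I}_\gamma(C^*(E))$ via $(H,S) \mapsto I^{\tops}_{(H,S)}$, and $\beta$ is the composition of the inverse of the former with the latter, hence a lattice isomorphism.

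For the second assertion my plan is to characterize $\mathcal{S}$-primeness, for both $\mathcal{S} = \mathbb{I}_\gamma(L_\mathsf{k}(E))$ and $\mathcal{S} = \mathbb{I}_\gamma(C^*(E))$, purely in terms of intersection and inclusion: namely, $P \in \mathcal{S}$ is $\mathcal{S}$-prime if and only if for all $I, J \in \mathcal{S}$, $I \cap J \subseteq P$ implies $I \subseteq P$ or $J \subseteq P$. Since $\beta$ preserves both $\cap$ and $\subseteq$, this characterization is automatically transported across $\beta$, which yields the desired bijection $\spec_\gamma(L_\mathsf{k}(E)) \to \prim_\gamma(C^*(E))$.

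The forward direction of the characterization is precisely the observation recorded right after the definition of $\mathcal{S}$-prime, via $IJ \subseteq I \cap J$. The reverse direction reduces to showing that $IJ \subseteq P$ forces $I \cap J \subseteq P$ whenever $I, J, P \in \mathcal{S}$. In the $C^*$-algebraic setting this follows from the standard fact $I \cap J = \overline{IJ}$ for closed two-sided ideals, combined with closedness of $P$. In the algebraic setting I would invoke Lemma~\ref{l:gradeidempotents} to obtain $I \cap J \subseteq IJ$ directly: writing $I = I^{\alg}_{(H,S)}$, the explicit generating idempotents supplied in the proof of that lemma --- the vertices in $H$ together with the gap idempotents $v - \sum_{s(e)=v,\, r(e)\notin H} e e^*$ for $v \in S$ --- allow one to produce, for any $x \in I$, a finite partial sum $f$ of these idempotents lying in $I$ with $fx = x$; hence any $x \in I \cap J$ satisfies $x = fx \in IJ$.

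The main obstacle I expect is this local-unit step on the algebraic side: one must verify that every element of $I^{\alg}_{(H,S)}$ is absorbed on the left by a finite partial sum of its generating idempotents, which involves a little book-keeping with the representation of an element of a graded ideal in terms of monomials in vertices, edges, and ghost edges. Once this is in hand, the preservation of primeness by $\beta$ is a purely formal consequence of $\beta$ being a lattice isomorphism preserving $\cap$ and $\subseteq$.
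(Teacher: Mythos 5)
Your route to the second assertion is genuinely different from the paper's. The paper does not argue lattice-theoretically: it quotes N\u{a}st\u{a}sescu--van Oystaeyen (Proposition II.1.4) to identify the $\mathcal{S}$-prime graded ideals of $L_\mathsf{k}(E)$ with the graded ideals that are prime in the usual sense, then invokes Rangaswamy's explicit classification of prime ideals of Leavitt path algebras together with the parallel classification of prime gauge-invariant ideals from the ERRS preprint, and simply observes that the two lists are indexed by the same admissible pairs. Your plan --- characterize $\mathcal{S}$-primeness on both sides purely in terms of $\cap$ and $\subseteq$ and then transport it through the lattice isomorphism $\beta$ --- is more elementary and self-contained; the $C^*$-side (via $I \cap J = \overline{IJ}$ for closed ideals) and the formal transport step are both fine, and the first assertion is handled identically to the paper.

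However, the algebraic half of your reduction contains a concrete error. You claim that for $I = I^{\alg}_{(H,S)}$ every $x \in I$ is absorbed on the left by a finite partial sum of the generating idempotents listed in the proof of Lemma~\ref{l:gradeidempotents} (the vertices in $H$ and the gap idempotents for $v \in S$). This is false: take $E$ with vertices $v,w$ and edges $f\colon v \to v$ and $e \colon v \to w$; then $H=\{w\}$ is saturated hereditary with $B_H = \emptyset$, the ideal $I^{\alg}_{(H,\emptyset)}$ is generated by the single idempotent $w$, and $e = e\,r(e) = ew$ lies in it, yet $we = 0$ because $s(e)=v\neq w$. The left local unit for $e$ inside this ideal is $ee^{*}$, which is not a partial sum of the listed generators. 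The statement you actually need --- that every graded ideal has local units consisting of idempotents --- is true, but it requires the larger family of idempotents of the form $\alpha\alpha^{*}$ and $\alpha r(\alpha)^{H}\alpha^{*}$ supplied by the Ruiz--Tomforde realization of $I^{\alg}_{(H,S)}$ as a Leavitt path algebra of $\overline{E}_{(H,S)}$; this is precisely Lemma~\ref{lem:idempotents} later in this paper. With that substitution your argument closes: for $x\in I\cap J$ choose an idempotent $u\in I$ with $ux=x$, so $x=ux\in IJ$, giving $I\cap J\subseteq IJ$ and hence the equivalence of $\mathcal{S}$-primeness with meet-primeness. One further point you should verify before relying on the transport step is that the definition of $\prim_\gamma(C^{*}(E))$ imported from the ERRS preprint is indeed primeness relative to the gauge-invariant lattice (as the phrase ``prime within the set of proper gauge invariant ideals'' suggests), since that is the condition your lattice argument actually computes.
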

\begin{proof}
By Theorem~\ref{t:admissiblelattice} there is a lattice isomorphism $\beta_{\alg}$ from the set of admissible pairs to $\mathbb{I}_\gamma ( L_\mathsf{k}(E))$ given by $\beta_{\alg}((H,S)) = I^{\alg}_{(H,S)}$, and a lattice isomorphism $\beta_{\tops}$ from the set of admissible pairs to $\mathbb{I}_\gamma ( C^*(E))$ given by $\beta_{\tops}((H,S)) = I^{\tops}_{(H,S)}$. 
Consequently, $\beta = \beta_{\tops} \circ \beta_{\alg}^{-1}$ is a lattice isomorphism. 

Let $\mathcal{S} = \mathbb{I}_\gamma ( L_\mathsf{k}(E) )$. 
It follows from \cite[Proposition~II.1.4]{MR676974} that a graded ideal $I$ of $L_\mathsf{k}(E)$ is $\mathcal{S}$-prime if and only if $I$ is a prime ideal of $L_\mathsf{k} (E)$.
Thus, by \cite[Theorem~3.12]{MR2998948}, every $\mathcal{S}$-prime ideal $I$ of $L_{\mathsf{k}} (E)$ is of the form 
\begin{itemize}
  \item $I = I_{(H,S)}^{\alg}$, where $E^{0} \setminus H$ is a maximal tail and $S = B_{H}$, or
  \item $I = I_{ ( H , S )}^{\alg}$ where $E^{0} \setminus H = M( u )$ and $S = B_{ H} \setminus \{ u \}$ for some breaking vertex,
\end{itemize} 
and that these ideals are distinct. 
In \cite[Section~3]{arXiv:1604.05439v1} it is shown that every ideal $\mathfrak{I}$ in $\prim_\gamma ( C^{*} (E) )$ is of the form
\begin{itemize}
  \item $\mathfrak{I} = I_{(H,S)}^{\tops}$, where $E^{0} \setminus H$ is a maximal tail and $S = B_{H}$, or
  \item $\mathfrak{I} = I_{ ( H , S )}^{\tops}$ where $E^{0} \setminus H = M( u )$ and $S = B_{ H} \setminus \{ u \}$ for some breaking vertex,
\end{itemize} 
and that these ideals are distinct.
Hence $I_{(H,S)}^{\tops}$ is in $\prim_\gamma ( C^{*} (E) )$ if and only if $I_{(H,S)}^{\alg}$ is in $\spec_\gamma ( L_\mathsf{k} (E) )$.
In other words $\beta$ maps $\spec_\gamma ( L_\mathsf{k} (E) )$ bijectively onto $\prim_\gamma ( C^{*} (E) )$.
\end{proof}

We can now prove that the collection of graded ideals satisfies the kernel assumption we used in Section~\ref{s:primespectrum}. 

\begin{proposition} \label{p:intersections}
Let $E$ be a graph.
If $I$ is a proper graded ideal of $L_{\mathsf{k}} (E)$, then  
\begin{align*}
  I = \ker\left( \setof{\mathfrak{p} \in \spec_\gamma ( L_{\mathsf{k}} (E) )}{\mathfrak{p} \supseteq I } \right).
\end{align*}
\end{proposition}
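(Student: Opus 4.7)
My plan is to reduce the statement to the analogous fact about gauge invariant ideals of $C^*(E)$, using the lattice isomorphism $\beta \colon \mathbb{I}_\gamma(L_\mathsf{k}(E)) \to \mathbb{I}_\gamma(C^*(E))$ from Lemma~\ref{l:isogradedgauge}. The inclusion $I \subseteq \ker(\setof{\mathfrak{p} \in \spec_\gamma(L_\mathsf{k}(E))}{\mathfrak{p} \supseteq I})$ is immediate from the definition of $\ker$, so all the work is in the reverse inclusion.

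The first step is to observe that $\beta$ preserves arbitrary intersections: both $\mathbb{I}_\gamma(L_\mathsf{k}(E))$ and $\mathbb{I}_\gamma(C^*(E))$ are closed under arbitrary intersections (these coincide with the lattice meet on each side), and an order isomorphism between such lattices automatically preserves arbitrary meets. Combined with the bijection $\spec_\gamma(L_\mathsf{k}(E)) \to \prim_\gamma(C^*(E))$ provided by the same lemma, this reduces the claim to the equality
\[
  \beta(I) = \ker\bigl( \setof{\mathfrak{P} \in \prim_\gamma(C^*(E))}{\mathfrak{P} \supseteq \beta(I)} \bigr),
\]
which is precisely what the analysis of gauge invariant ideals carried out in \cite[Section~3]{arXiv:1604.05439v1} delivers.

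The step I expect to be the main obstacle is extracting this $C^*$-algebraic intersection result in exactly the stated form from the cited source. Should it not be available verbatim, a direct algebraic proof can be given using the explicit description of $\mathcal{S}$-prime graded ideals recalled in the proof of Lemma~\ref{l:isogradedgauge}, namely the ideals $I^{\alg}_{(H', B_{H'})}$ with $E^{0} \setminus H'$ a maximal tail, and the ideals $I^{\alg}_{(H', B_{H'} \setminus \{u\})}$ with $E^{0} \setminus H' = M(u)$ for a breaking vertex~$u$. Writing $I = I^{\alg}_{(H,S)}$, one would show that every vertex $v \in E^{0} \setminus H$ and every breaking vertex $u \in B_H \setminus S$ can be separated from $I$ by some such $\mathcal{S}$-prime: a Zorn's lemma argument produces a maximal tail contained in $E^{0} \setminus H$ that still contains $v$, respectively for which $u$ remains breaking, yielding an $\mathcal{S}$-prime graded ideal containing $I$ but missing the chosen generator. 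Since any graded ideal properly containing $I$ must introduce at least one such generator beyond those defining $I$, this collection of $\mathcal{S}$-primes intersects down to $I$ itself, which is the desired conclusion.
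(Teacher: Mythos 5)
Your proposal is correct and follows essentially the same route as the paper: both reduce the statement, via the lattice isomorphism $\beta$ of Lemma~\ref{l:isogradedgauge}, to the corresponding intersection result for prime gauge invariant ideals of $C^*(E)$ established in the cited companion paper. The only difference is in how the intersection is carried across $\beta$: you invoke the abstract fact that an order isomorphism between lattices closed under arbitrary intersections preserves arbitrary meets (valid, granting that arbitrary intersections of graded ideals are graded, which is the content of Lemma~\ref{lem:intersection-graded}), whereas the paper performs the same transfer by an explicit computation with admissible pairs, showing $I^{\tops}_{(H,S)} = I^{\tops}_{(H',S')}$ and hence $(H,S)=(H',S')$.
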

\begin{proof}
Let $\beta$ be the lattice isomorphism from Lemma \ref{l:isogradedgauge} and let $I \in \mathbb{I}_\gamma ( L_\mathsf{k}(E))$ be a proper ideal. 

By \cite[Lemma~3.5]{arXiv:1604.05439v1} we have that 
\begin{align*}
  \beta(I) = \bigcap_{ \substack{ \mathfrak{q} \in \prim_\gamma ( C^{*} (E) ) \\ \mathfrak{q} \supseteq  \beta(I) }} \mathfrak{q}.
\end{align*}
Since $I$ is a graded ideal $I = I_{ (H,S)}^{\alg}$ for some admissible pair $(H,S)$.
As the intersection of graded ideals is again graded we also have
\begin{align*}
  \bigcap_{ \substack{ \mathfrak{p} \in \spec_\gamma ( L_{\mathsf{k}} (E) ) \\ \mathfrak{p} \supseteq I }} \mathfrak{p} = I_{ (H', S' )}^{\alg},
\end{align*}  
for some admissible pair $(H',S')$. 
We will now show that $I_{ (H,S)}^{\tops} = I_{ (H',S')}^{\tops}$.

Since $I_{ (H', S' )}^{\alg}$ is an intersection of ideals that all contain $I_{ (H,S)}^{\alg}$, $I_{ (H,S)}^{\alg} \subseteq I_{ (H',S')}^{\alg}$ which implies that $I_{ (H,S)}^{\tops} \subseteq I_{ (H',S')}^{\tops}$ as $\beta$ is order preserving. 
If $\mathfrak{q} \in \prim_\gamma ( C^{*} (E) )$ is such that $I_{ ( H , S )}^{\tops} \subseteq \mathfrak{q}$, then $I_{ (H,S)}^{\alg} \subseteq \beta^{-1}( \mathfrak{q} )$.
Therefore $\beta^{-1}(\mathfrak{q})$ is one the ideals whose intersection define $I_{ (H', S' )}^{\alg}$ so
\begin{align*}
  I_{ (H',S')}^{\tops} = \beta ( I_{ (H', S' )}^{\alg} ) \subseteq  \beta(  \beta^{-1} ( \mathfrak{q} ) ) = \mathfrak{q}.
\end{align*}
We now have the following inclusions 
\begin{align*}
  I_{ (H,S)}^{\tops}	\subseteq I_{ (H',S')}^{\tops} \subseteq \bigcap_{ \substack{ \mathfrak{q} \in \prim_\gamma ( C^{*} (E) )  \\ \mathfrak{q} \supseteq I_{ (H,S)}^{\tops} }  } \mathfrak{q}
			=  \beta(I) 
			=  I_{ (H,S)}^{\tops}.
\end{align*} 
Therefore, $I_{ (H,S)}^{\tops} = I_{ (H',S')}^{\tops}$. 
Hence $(H , S ) = (H', S' )$ so
\begin{align*}
  I = I_{(H,S)}^{\alg} = I_{ (H',S')}^{\alg} = \bigcap_{ \substack{ \mathfrak{p} \in \spec_\gamma ( L_{\mathsf{k}} (E) ) \\ \mathfrak{p} \supseteq I }} \mathfrak{p} = \ker\left( \setof{\mathfrak{p} \in \spec_\gamma ( L_{\mathsf{k}} (E) )}{\mathfrak{p} \supseteq I } \right)
\end{align*}
\end{proof}

\begin{lemma}\label{lem:intersection-graded}
Let $\mathsf{k}$ be a field and let $\{ I_\alpha \}_{ \alpha \in S }$ be a subset of $\mathbb{I}_\gamma ( L_\mathsf{k}(E) )$.  Then the ideal $I = \bigcap_{ \alpha \in S } I_{\alpha}$ is an element of $\mathbb{I}_\gamma ( L_\mathsf{k}(E) )$.
\end{lemma}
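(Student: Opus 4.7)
The plan is to argue directly from the definition of a graded ideal in the $\Z$-graded algebra $L_\mathsf{k}(E)$. Recall that $L_\mathsf{k}(E)=\bigoplus_{n\in\Z}L_\mathsf{k}(E)_n$ is $\Z$-graded with $\deg(v)=0$ for $v\in E^0$, $\deg(e)=1$ and $\deg(e^*)=-1$ for $e\in E^1$. A two-sided ideal $J$ of $L_\mathsf{k}(E)$ is graded precisely when for every $x\in J$, writing $x=\sum_n x_n$ for its unique decomposition into homogeneous components, each $x_n$ again lies in $J$; equivalently, $J=\bigoplus_{n\in\Z}(J\cap L_\mathsf{k}(E)_n)$.

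With this characterization in hand the verification is essentially one line. Any element $x$ of $I=\bigcap_{\alpha\in S}I_\alpha$ lies in every $I_\alpha$, and since each $I_\alpha$ is graded each homogeneous component $x_n$ of $x$ lies in every $I_\alpha$. Hence $x_n\in I$ for every $n$, which shows that $I$ is graded.

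I do not expect any real obstacle: the only ingredient beyond the definition is the uniqueness of the homogeneous decomposition in a $\Z$-graded algebra, which is automatic. An alternative, more indirect route would be to invoke Theorem~\ref{t:admissiblelattice}, write each $I_\alpha=I_{(H_\alpha,S_\alpha)}^{\alg}$, verify that $H=\bigcap_\alpha H_\alpha$ is saturated hereditary, identify the correct set $S$ of breaking vertices for $H$, and then check the resulting equality of ideals. This works but is needlessly delicate, so I would stick with the direct grading argument above.
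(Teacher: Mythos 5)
Your proof is correct and follows essentially the same route as the paper's: both characterize graded ideals by the condition that every element's homogeneous components again lie in the ideal, and then observe that this property passes to arbitrary intersections. (The paper phrases the characterization with an intersection symbol where a sum of homogeneous parts is clearly intended, but the underlying argument is identical to yours.)
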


\begin{proof}
Recall that $J \in \mathbb{I}_\gamma ( L_\mathsf{k}(E) )$ if and only if $J = \bigcap_{ n \in \Z }  J \cap L_{\mathsf{k}}(E)_n$,
where 
\[
L_{\mathsf{k}}(E)_n = \left\{  \sum_{k=1}^l \lambda_k \mu_k \nu_k^* : \text{$\lambda_k \in \mathsf{k}$, $\mu_k, \nu_k$ are finite paths, and $|\mu_k | - | \nu_k | = n$}\right\}.
\]
Then 
\begin{align*}
\bigcap_{ n \in \Z } I \cap L_\mathsf{k}(E)_n &= \bigcap_{ n \in \Z } \bigcap_{\alpha \in S } I_{\alpha} \cap L_\mathsf{k}(E)_n = \bigcap_{ \alpha \in S } \bigcap_{n \in \Z } I_\alpha \cap L_\mathsf{k}(E)_n \\
							&= \bigcap_{ \alpha \in S }  I_\alpha = I.
\end{align*}
Hence, $I \in \mathbb{I}_\gamma ( L_\mathsf{k}(E) )$.
\end{proof}

\begin{corollary}
The map 
\begin{align*}
U \mapsto \bigcap_{ \mathfrak{p} \in \spec_\gamma(R) \setminus U } \mathfrak{p}
\end{align*}
is a lattice isomorphism from $\mathbb{O} ( \spec_\gamma( L_{\mathsf{k}} (E) ) )$ to $\mathbb{I}_\gamma ( L_{\mathsf{k}} (E) )$.
\end{corollary}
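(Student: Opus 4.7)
The proof is essentially an application of Theorem~\ref{t:latticeisofullring} to the special case $R = L_{\mathsf{k}}(E)$ with $\mathcal{S} = \mathbb{I}_\gamma(L_{\mathsf{k}}(E))$. The map $U \mapsto \bigcap_{\mathfrak{p} \in \spec_\gamma(L_{\mathsf{k}}(E)) \setminus U} \mathfrak{p}$ is, by definition, the map $\phi(U) = \ker(U^c)$ from that theorem, so once the hypotheses of Theorem~\ref{t:latticeisofullring} are verified, the conclusion follows immediately.

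The plan is therefore to verify the three hypotheses in turn. First, $\mathbb{I}_\gamma(L_{\mathsf{k}}(E))$ is a sublattice of $\mathbb{I}(L_{\mathsf{k}}(E))$: it contains $\{0\}$ and $L_{\mathsf{k}}(E)$ (both correspond to admissible pairs via Theorem~\ref{t:admissiblelattice}), and it is closed under finite joins and meets since the lattice isomorphism of Theorem~\ref{t:admissiblelattice} identifies it with the lattice of admissible pairs. Second, closure under arbitrary intersections is exactly Lemma~\ref{lem:intersection-graded}. Third, the kernel condition
\[
  I = \ker\bigl(\setof{\mathfrak{p} \in \spec_\gamma(L_{\mathsf{k}}(E))}{\mathfrak{p} \supseteq I}\bigr)
\]
holds for every proper graded ideal $I$ by Proposition~\ref{p:intersections}, and it holds for $I = L_{\mathsf{k}}(E)$ trivially via the convention $\ker(\emptyset) = L_{\mathsf{k}}(E)$ already used in the proof of Lemma~\ref{l:closure}.

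With all three hypotheses in place, Theorem~\ref{t:latticeisofullring} yields that $\phi \colon \mathbb{O}(\spec_\gamma(L_{\mathsf{k}}(E))) \to \mathbb{I}_\gamma(L_{\mathsf{k}}(E))$, $U \mapsto \ker(U^c)$, is a lattice isomorphism, which is the statement of the corollary. There is no real obstacle here; the substantive content has been absorbed into Proposition~\ref{p:intersections} (which relies on the bijection with $\prim_\gamma(C^*(E))$ from Lemma~\ref{l:isogradedgauge} and the corresponding $C^*$-algebraic fact \cite[Lemma~3.5]{arXiv:1604.05439v1}) and Lemma~\ref{lem:intersection-graded}. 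The corollary simply packages those two facts into the abstract framework of Section~\ref{s:primespectrum}.
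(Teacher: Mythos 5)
Your proof is correct and follows exactly the paper's route: the paper's own proof is the one-line observation that Theorem~\ref{t:latticeisofullring} applies because its hypotheses are supplied by Proposition~\ref{p:intersections} and Lemma~\ref{lem:intersection-graded}. Your additional care in handling the improper ideal $I = L_{\mathsf{k}}(E)$ (which Proposition~\ref{p:intersections} formally excludes) is a nice touch the paper leaves implicit, but it does not change the argument.
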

\begin{proof}
This follows from Theorem~\ref{t:latticeisofullring} which is applicable by Proposition \ref{p:intersections} and Lemma~\ref{lem:intersection-graded}. 
\end{proof}

As the final result in this section we prove that $\beta$ restricts to a homeomorphism between the graded prime ideals and the gauge prime ideals. 

\begin{theorem}\label{t:specprim}
Let $E$ be a graph.  Then $\phi = \beta \vert_{\spec_\gamma( L_{\mathsf{k}} (E) ) }$ is a homeomorphism from $\spec_\gamma( L_{\mathsf{k}} (E) )$ to $\prim_\gamma ( C^{*} (E) )$, where $\beta$ is the lattice isomorphism from Lemma~\ref{l:isogradedgauge}.
\end{theorem}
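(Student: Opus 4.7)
The plan is to show that $\phi$ is a continuous open bijection by describing the open sets of both spaces explicitly and matching them up via the lattice isomorphism $\beta$. Since Lemma~\ref{l:isogradedgauge} already gives that $\phi$ is a bijection, it will suffice to check that $\phi$ carries open sets to open sets in both directions.

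The first step is to verify that the hypotheses of Lemma~\ref{l:opensets} are in place on both sides so that the open sets admit a clean description. On the algebraic side, $\mathbb{I}_\gamma(L_\mathsf{k}(E))$ is closed under arbitrary intersections by Lemma~\ref{lem:intersection-graded}, contains $\{0\}$ and $L_\mathsf{k}(E)$, and the kernel condition $I = \ker(\{\mathfrak{p} \in \spec_\gamma(L_\mathsf{k}(E)) : \mathfrak{p} \supseteq I\})$ is exactly Proposition~\ref{p:intersections}. On the topological side, the analogous closure and kernel statements hold, with \cite[Lemma~3.5]{arXiv:1604.05439v1} playing the role of Proposition~\ref{p:intersections}. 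Consequently, every open set in $\spec_\gamma(L_\mathsf{k}(E))$ has the form
\[
W^{\alg}(I) = \setof{ \mathfrak{p} \in \spec_\gamma(L_\mathsf{k}(E)) }{ \mathfrak{p} \nsupseteq I }
\]
for some $I \in \mathbb{I}_\gamma(L_\mathsf{k}(E))$, and every open set in $\prim_\gamma(C^*(E))$ has the form
\[
W^{\tops}(J) = \setof{ \mathfrak{q} \in \prim_\gamma(C^*(E)) }{ \mathfrak{q} \nsupseteq J }
\]
for some $J \in \mathbb{I}_\gamma(C^*(E))$.

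The second step is the matching computation. Since $\beta$ is an order isomorphism on the full lattices of graded/gauge invariant ideals (Lemma~\ref{l:isogradedgauge}), for any $\mathfrak{p} \in \spec_\gamma(L_\mathsf{k}(E))$ and any $I \in \mathbb{I}_\gamma(L_\mathsf{k}(E))$ one has $\mathfrak{p} \supseteq I$ if and only if $\beta(\mathfrak{p}) \supseteq \beta(I)$. Therefore $\phi(W^{\alg}(I)) = W^{\tops}(\beta(I))$ and, symmetrically, $\phi^{-1}(W^{\tops}(J)) = W^{\alg}(\beta^{-1}(J))$. Because $\beta$ is a bijection between the two ideal lattices, this shows that $\phi$ maps the open sets of its domain bijectively onto the open sets of its codomain, so $\phi$ is simultaneously continuous and open, hence a homeomorphism.

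The main obstacle is really just bookkeeping: confirming that Lemma~\ref{l:opensets} applies on both sides and that containment and primeness are preserved under $\beta$. All of these points are already settled in the lemmas preceding the statement, so once they are assembled the homeomorphism claim reduces to the order-preservation of $\beta$.
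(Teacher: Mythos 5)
Your proposal is correct and follows essentially the same route as the paper: both arguments describe the open sets on each side as the sets $W(I)$ via Lemma~\ref{l:opensets} together with Proposition~\ref{p:intersections} (and its $C^*$-algebraic counterpart from the companion paper), and then use the order-isomorphism property of $\beta$ to compute that $\phi$ matches these open sets up bijectively. The only cosmetic difference is that the paper phrases the second direction as ``a completely parallel computation'' while you spell out the symmetric identity $\phi^{-1}(W^{\tops}(J)) = W^{\alg}(\beta^{-1}(J))$ explicitly.
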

        
\begin{proof} 
We first observe that Lemma~\ref{l:opensets} and Proposition~\ref{p:intersections} combine to show that the open sets of $\spec_\gamma( L_{\mathsf{k}} (E) )$ are precisely the sets of the form $W(I)$ for some proper ideal $I \in \mathbb{I}_\gamma ( L_\mathsf{k}(E) )$. 

Let a proper ideal $I \in \mathbb{I}_\gamma ( L_\mathsf{k}(E) )$ be given. 
Then 
\begin{align*}
  \beta(W(I))	&= \beta \left( \setof{\mathfrak{p} \in \spec_\gamma( L_{\mathsf{k}} (E) )}{ \mathfrak{p} \nsupseteq I} \right) \\
		&= \setof{ \beta(\mathfrak{p})}{ \mathfrak{p} \in \spec_\gamma( L_{\mathsf{k}} (E) ) \text{ and } \mathfrak{p} \nsupseteq I} \\
		&= \setof{ \beta(\mathfrak{p})}{ \mathfrak{p} \in \spec_\gamma( L_{\mathsf{k}} (E) ) \text{ and } \beta(\mathfrak{p}) \nsupseteq \beta(I)} \\
		&= \setof{ \mathfrak{q} \in \prim_\gamma ( C^{*} (E) ) }{ \mathfrak{q} \nsupseteq \beta(I)}.
\end{align*}
By \cite[Lemma 3.6]{arXiv:1604.05439v1} the last set is open, and hence $\phi^{-1}$ is continuous. 

The above computation used that $\beta$ was a lattice isomorphism and that we had complete, and similar looking, descriptions of the open sets in $\spec_\gamma( L_{\mathsf{k}} (E) )$ and $\prim_\gamma ( C^{*} (E) )$. 
Hence a completely parallel computation will show that $\phi$ is also continuous. 
Therefore $\phi$ is a homeomorphism. 
\end{proof}

\section{Filtered algebraic $K$-theory}

In this section we define filtered algebraic $K$-theory for rings and show that if two Leavitt path algebras over $\C$ have isomorphic filtered algebraic $K$-theory then the associated graph $C^*$-algebras have isomorphic filtered $K$-theory.   We then use this result to answer the Abrams-Tomforde conjecture for a large class of finite graphs.

Suppose $R$ is a ring and $\mathcal{S}$ is a sublattice of ideals.  Moreover, assume that every $I \in \mathcal{S}$ has a countable approximate unit consisting of idempotents, i.e., for every $I \in \mathcal{S}$, there exists a sequence $\{ e_n \}_{n =1}^\infty$ in $I$ such that 
\begin{itemize}
\item $e_n$ is an idempotent for all $n \in \N$,
\item $e_n e_{n+1} = e_n$ for all $n \in \N$, and 
\item for all $r \in I$, there exists $n \in \N$ such that $r e_n = e_n r = r$.
\end{itemize}  
Then for any locally closed subset $Y = U \setminus V$ of $\spec_\mathcal{S}(R)$, we have the algebraic $K$-groups $\{ K_{n}^{\alg} ( R [Y] ) \}_{ n \in \Z}$. 
Moreover, for all $U_1, U_2, U_3 \in \mathbb{O}(\spec_\mathcal{S}(R))$ with $U_1 \subseteq U_2 \subseteq U_3$, by \cite[Lemma~3.10]{MR3188556}, we have a long exact sequence in algebraic $K$-theory
\begin{align*}
\scalebox{.9}{
\xymatrix{
K_{n}^{\alg} ( R [ U_2\setminus U_1  ] ) \ar[r]^{ \iota_{*} } & K_{n}^{\alg} ( R [ U_3 \setminus U_1 ] ) \ar[r]^{ \pi_{*} } & K_{n}^{\alg} ( R [ U_3 \setminus U_2 ] ) \ar[r]^{ \partial_{*} } & K_{n-1}^{\alg} ( R [ U_2 \setminus U_1 ] ).
}
}
\end{align*}

\begin{definition}
Let $R$ be a ring and let $\mathcal{S}$ be a sublattice of $\mathbb{I}(R)$ closed under arbitrary intersections and containing the trivial ideals $\{ 0 \}$ and $R$.
Suppose that for each $I \in \mathcal{S}$ we have that 
\begin{align*}
  I = \ker(\setof{\mathfrak{p} \in \spec_\mathcal{S}(R)}{\mathfrak{p} \supseteq I}).
\end{align*}
Moreover, assume that every $I \in \mathcal{S}$ has a countable approximate unit consisting of idempotents.
\begin{enumerate}
  \item We define $\mathsf{FK}^{\alg} ( \spec_\mathcal{S} (R) ; R)$ to be the collection 
  \[
  \{ K_{n}^{\alg} ( R [ Y ] ) \}_{ n \in \Z , Y \in \mathbb{LC} (\prim_\mathcal{S}(R)) },
  \] equipped with the natural transformations $\{ \iota_{*} , \pi_{*} , \partial_{*} \}$.
  \item We define $\mathsf{FK}^{\alg, +} ( \spec_\mathcal{S} (R) , R)$ to be the collection $\mathsf{FK}^{\alg} ( \spec_\mathcal{S} (R) ; R)$ together with the positive cone of $K_{0}^{\alg} ( R [ Y ] )$ for all $Y \in \mathbb{LC} ( \spec_\mathcal{S}(R) )$.
\end{enumerate}
\end{definition}

\begin{definition}
Let $R, R'$ be rings, let $\mathcal{S}$ be a sublattice of $\mathbb{I}(R)$ closed under arbitrary intersections and containing the trivial ideals $\{ 0 \}$ and $R$, and let $\mathcal{S}'$ be a sublattice of $\mathbb{I}(R')$ closed under arbitrary intersections and containing the trivial ideals $\{ 0 \}$ and $R'$.
Suppose that for each $I \in \mathcal{S}$ we have that 
\begin{align*}
  I = \ker(\setof{\mathfrak{p} \in \spec_\mathcal{S}(R)}{\mathfrak{p} \supseteq I}),
\end{align*}
and that for each $I' \in \mathcal{S}'$ we have that 
\begin{align*}
  I' = \ker(\setof{\mathfrak{p} \in \spec_{\mathcal{S}'}(R')}{\mathfrak{p} \supseteq I'}).
\end{align*}
Moreover, assume that every $I \in \mathcal{S}$ and every $I' \in \mathcal{S}'$ have a countable approximate unit consisting of idempotents.  

An isomorphism from $\mathsf{FK}^{\alg} ( \spec_\mathcal{S} (R) ; R)$ to $\mathsf{FK}^{\alg} ( \spec_{\mathcal{S}'} (R') ; R')$ consists of a homeomorphism $\phi \colon \spec_\mathcal{S} ( R) \rightarrow \spec_{\mathcal{S}'} (R')$ and isomorphisms $\alpha_{Y, * }$ from $K_* ( R[Y ] ) $ to $K_* ( R'[\phi(Y)] )$ for each $Y \in \mathbb{LC} ( \spec_\mathcal{S} ( R) )$ such the diagrams involving the natural transformations commute.

If the isomorphism from $\mathsf{FK}^{\alg} ( \spec_\mathcal{S} (R) ; R)$ to $\mathsf{FK}^{\alg} ( \spec_{\mathcal{S}'} (R') ; R')$ restricts to an order isomorphism on $K_0 ( R[Y] )$ for all $Y \in \mathbb{LC} ( \spec_\mathcal{S} ( R) )$, we write
\[
    \mathsf{FK}^{\alg, +} ( \spec_\mathcal{S} (R) ; R) \cong \mathsf{FK}^{\alg, +} ( \spec_{\mathcal{S}'} (R') ; R').
\]
\end{definition}

\begin{lemma}\label{lem:idempotents}
Let $E$ be a graph and let $\mathsf{k}$ be a field.  Then every graded-ideal of $L_\mathsf{k} (E)$ has a countable approximate unit consisting of idempotents.  Consequently, we may define $\mathsf{FK}^{\alg, +} ( \spec_\gamma ( L_\mathsf{k} (E) ) ; L_\mathsf{k} (E) )$.
\end{lemma}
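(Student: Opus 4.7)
The plan is to reduce the statement to the well-known fact that a Leavitt path algebra of a countable graph admits a countable approximate unit consisting of idempotents, via the folklore observation that a graded ideal of a Leavitt path algebra is itself ring-isomorphic to a Leavitt path algebra of a suitable countable graph.

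First, by the proof of Lemma~\ref{l:gradeidempotents}, the graded ideal $I = I_{(H,S)}^{\alg}$ is generated, as a two-sided ideal, by the set
\[
G := \{v : v \in H\} \cup \Bigl\{v - \sum_{\substack{s(e)=v \\ r(e) \notin H}} ee^* : v \in S\Bigr\}.
\]
A direct computation using the Leavitt path algebra relations shows that the elements of $G$ are pairwise orthogonal idempotents (distinct vertices are orthogonal; if $v \in H$ and $w \in S$ then $v \cdot ee^*=0$ for every edge $e$ in the defining sum at $w$, so $v \cdot p_w^H = 0$, and similarly on the other side; for $v \neq w$ in $S$ the two defining sums also annihilate each other by the $e^*f = \delta_{ef}r(e)$ relation). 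Since $E$ is countable by our standing assumption, $G$ is countable.

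The key input is that $I$ is ring-isomorphic to the Leavitt path algebra $L_\mathsf{k}(F)$ of a countable graph $F$ whose vertex set is in canonical bijection with $G$; this is the algebraic analogue of the well-known $C^*$-algebraic fact that $I_{(H,S)}^{\tops}$ is a graph $C^*$-algebra, and it belongs to the Leavitt path algebra folklore, with $F$ constructed explicitly from $(H,S)$ and the portion of $E$ feeding into $H$ in parallel with the analysis giving the lattice isomorphism in \cite[Theorem~5.7]{MR2363133}. Granted this isomorphism, I pull back the standard approximate unit from $L_\mathsf{k}(F)$: enumerate $F^0 = \{u_1, u_2, \ldots\}$ and set $e_n := u_1 + u_2 + \cdots + u_n$. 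Each $e_n$ is an idempotent (pairwise orthogonal vertex idempotents), $e_n e_{n+1} = e_n$, and for any $r = \sum_k \lambda_k \mu_k \nu_k^*$ in standard form in $L_\mathsf{k}(F)$, choosing $n$ to contain the finite set $\{s(\mu_k), s(\nu_k)\}_k$ gives $e_n r = r e_n = r$. Transferring via the isomorphism produces the approximate unit in $I$, and the ``Consequently'' clause follows at once because every $I \in \mathbb{I}_\gamma(L_\mathsf{k}(E))$ (including $L_\mathsf{k}(E)$ itself, corresponding to $(E^0,\emptyset)$) has such an approximate unit, verifying the hypothesis of the definition preceding the lemma.

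The main obstacle I anticipate is making the ring-isomorphism $I \cong L_\mathsf{k}(F)$ precise: explicitly constructing $F$ (in particular handling the breaking vertices in $S$ correctly) and verifying that the designated generators of $L_\mathsf{k}(F)$ map to elements of $I$ satisfying the Cuntz--Krieger relations is technical rather than deep. A self-contained alternative is to build the approximate unit directly from finite sums of idempotents of the form $\mu g \mu^*$, where $\mu$ is a finite path in $E$ and $g \in G$ with $r(\mu) = s(g)$: each such element lies in $I$ and is an idempotent (using $\mu^* \mu = r(\mu)$ together with $g^2 = g$), and careful bookkeeping on standard-form representatives of elements of $I$ shows that suitable finite sums of these absorb any given element on both sides, yielding the countable approximate unit directly.
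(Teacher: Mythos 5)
Your proposal is correct and follows essentially the same route as the paper: the paper also observes that finite sums of vertices give a countable approximate unit of idempotents in any Leavitt path algebra of a countable graph, and then invokes the fact that every graded ideal of $L_\mathsf{k}(E)$ is isomorphic to a Leavitt path algebra. The ``folklore'' isomorphism you flag as the main obstacle is exactly \cite[Corollary~6.2]{MR3254772} (via the graph $\overline{E}_{(H,S)}$), which the paper cites directly, so no further construction is needed.
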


\begin{proof}
Let $F$ be a graph and set $F^0 = \{ v_1, v_2, \dots \}$.  Then $\{ \sum_{ k =1 }^n v_k \}_{n = 1}^\infty$ is a countable approximate unit consisting of idempotents for $L_\mathsf{k}(F)$.  Thus, every Leavitt path algebra has a countable approximate unit consisting of idempotents.   The lemma now follows since by \cite[Corollary~6.2]{MR3254772} every graded-ideal of $L_\mathsf{k}(E)$ is isomorphic to a Leavitt path algebra.  
\end{proof}

\begin{lemma}\label{lem:opensets-ideals}
Let $E$ be a directed graph and let $\ftn{\phi}{\spec_{\gamma} (L_\C(E))}{ \spec_{\gamma}( C^*(E) ) }$ be the homeomorphism given in Theorem~\ref{t:specprim}.  
The for all $U \in \mathbb{O} ( \spec_\gamma (L_\C(E) ) )$, there exists an admissible pair $(H,S)$ such that $L_\C (E) [ U ] = I^\alg_{(H,S)}$ and $C^*(E)[\phi(U)]$ $= I^\tops_{(H,S)}$. 
\end{lemma}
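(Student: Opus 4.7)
The plan is to obtain the admissible pair directly from $U$ via the $L_{\mathsf{k}}(E)$-side and then transport it across the lattice isomorphism $\beta$ from Lemma~\ref{l:isogradedgauge}, using the homeomorphism property of $\phi$ from Theorem~\ref{t:specprim} to match up the open sets on the two sides.

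First I would apply Theorem~\ref{t:latticeisofullring} (valid in this setting by Proposition~\ref{p:intersections} and Lemma~\ref{lem:intersection-graded}) to conclude that $L_\C(E)[U]$ is a well-defined graded ideal of $L_\C(E)$. By Theorem~\ref{t:admissiblelattice} this graded ideal is $I^\alg_{(H,S)}$ for a unique admissible pair $(H,S)$, and by definition of $\phi$ in Theorem~\ref{t:latticeisofullring} together with Proposition~\ref{p:intersections} we have
\begin{align*}
U^c = \setof{\mathfrak{p} \in \spec_\gamma(L_\C(E))}{\mathfrak{p} \supseteq I^\alg_{(H,S)}}.
\end{align*}

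Next I would translate this description of $U^c$ over to $\prim_\gamma(C^*(E))$. Since $\phi = \beta|_{\spec_\gamma(L_\C(E))}$ is a bijection, $\phi(U)^c = \beta(U^c)$, and since $\beta$ is an order-preserving lattice isomorphism sending $I^\alg_{(H,S)}$ to $I^\tops_{(H,S)}$, the condition $\mathfrak{p} \supseteq I^\alg_{(H,S)}$ is equivalent to $\beta(\mathfrak{p}) \supseteq I^\tops_{(H,S)}$. Hence
\begin{align*}
\phi(U)^c = \setof{\mathfrak{q} \in \prim_\gamma(C^*(E))}{\mathfrak{q} \supseteq I^\tops_{(H,S)}}.
\end{align*}

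Finally I would invoke \cite[Lemma~3.5]{arXiv:1604.05439v1} (the analogue of Proposition~\ref{p:intersections} on the $C^*$-side, already used in the proof of Proposition~\ref{p:intersections}) to conclude that
\begin{align*}
C^*(E)[\phi(U)] = \ker(\phi(U)^c) = I^\tops_{(H,S)}.
\end{align*}
This, combined with $L_\C(E)[U] = I^\alg_{(H,S)}$, yields the lemma. The only mildly delicate point is keeping the two distinct meanings of $\phi$ apart — the lattice isomorphism of Theorem~\ref{t:latticeisofullring} versus the spectral homeomorphism of Theorem~\ref{t:specprim} — but no real obstacle arises, since all the substantive work has been done in Lemma~\ref{l:isogradedgauge}, Proposition~\ref{p:intersections}, and the cited lemma from \cite{arXiv:1604.05439v1}.
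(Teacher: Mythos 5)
Your proposal is correct and follows essentially the same route as the paper, which simply observes that the claim follows from $\phi$ being the restriction of the lattice isomorphism $\beta$ sending $I^{\alg}_{(H,S)}$ to $I^{\tops}_{(H,S)}$; you have merely written out the verification (identifying $L_\C(E)[U]=\ker(U^c)$ as $I^{\alg}_{(H,S)}$, transporting $U^c$ across $\beta$, and applying the kernel identity on the $C^*$-side) that the paper leaves implicit.
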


\begin{proof}
This follows from the construction of $\phi$ in Theorem~\ref{t:specprim} as the restriction of the lattice isomorphism $\beta$ that sends $I^\alg_{(H,S)}$ to $I_{(H,S)}^\tops$.  
\end{proof}

Let $\mathfrak{A}$ be a $C^*$-algebra and let $A$ be a $*$-algebra.  Suppose $\iota_\mathfrak{A}$ is a $*$-homomorphism from $A$ to $\mathfrak{A}$.  Denote the composition
\[
\xymatrix{ K_n^\mathrm{alg} (A) \ar[r]^{ K_n ( \iota_\mathfrak{A} ) } & K_n^\mathrm{alg} ( \mathfrak{A} ) \ar[r] & K_n^\tops ( \mathfrak{A} )}
\]
by  $\gamma_{n, \mathfrak{A}}$, where $K_n^\tops ( \mathfrak{A} )$ is the (usual) topological $K$-theory of the $C^*$-algebra $\mathfrak{A}$.

\begin{theorem}\label{t:alg-top-K-theory}
Let $E$ be a directed graph and let 
\[
\ftn{\phi}{\spec_{\gamma} (L_\C(E))}{\spec_{\gamma}( C^*(E) )}
\] 
be the homeomorphism given in Theorem~\ref{t:specprim}.  For all $U_1, U_2, U_3 \in \mathbb{O} ( \spec_{\gamma} (L_\C(E)) )$ with $U_1\subseteq U_2 \subseteq U_3$, for all $n \in \Z$, the diagram 
\[
\scalebox{.725}{
$\xymatrix{
K_n^\alg( L_\C (E)[ U_2 \setminus U_1] ) \ar[r] \ar[d]_{\gamma_{n, C^*(E)[V_2 \setminus V_1] } }& K_{n}^\alg ( L_\C(E)[U_3\setminus U_1] ) \ar[r] \ar[d]_{\gamma_{n, C^*(E)[V_3 \setminus V_1] } } & K_{n}^\alg ( L_\C(E)[U_3 \setminus U_2] ) \ar[r]  \ar[d]_{\gamma_{n, C^*(E)[V_3 \setminus V_2] } } & K_{n-1}^\alg ( L_\C(E)[U_2 \setminus U_1] ) \ar[d]_{\gamma_{n-1, C^*(E)[V_2 \setminus V_1] } } \\
K_n^\tops( C^*(E)[ V_2 \setminus V_1  ] ) \ar[r] & K_{n}^\tops ( C^*(E)[ V_3\setminus V_1  ] ) \ar[r] & K_{n}^\tops ( C^*(E)[ V_3 \setminus V_2  ] ) \ar[r] & K_{n-1}^\tops ( C^*(E)[ V_2 \setminus V_1  ] )
}$
}
\]
is commutative, where $V_i = \phi ( U_i)$
\end{theorem}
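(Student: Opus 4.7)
The plan is to factor each vertical map of the diagram through algebraic $K$-theory of the $C^*$-algebra and verify commutativity of two stacked ladders.

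First I would note that by Lemma~\ref{lem:opensets-ideals}, for every $U \in \mathbb{O}(\spec_\gamma(L_\C(E)))$ the canonical inclusion $L_\C(E) \hookrightarrow C^*(E)$ sends $L_\C(E)[U] = I^{\alg}_{(H,S)}$ into $C^*(E)[\phi(U)] = I^{\tops}_{(H,S)}$, since the standard generators of $I^{\alg}_{(H,S)}$ recalled in the proof of Lemma~\ref{l:gradeidempotents} also generate $I^{\tops}_{(H,S)}$. Hence for $U_1 \subseteq U_2 \subseteq U_3$ one obtains a morphism of ring extensions from
\[ 0 \to L_\C(E)[U_2\setminus U_1] \to L_\C(E)[U_3\setminus U_1] \to L_\C(E)[U_3\setminus U_2] \to 0 \]
to the corresponding extension of $C^*$-algebras built from the $V_i$, where the identifications of the subquotients as quotients of the larger ring are provided canonically by Lemma~\ref{l:subquot}.

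Next I would apply \cite[Lemma~3.10]{MR3188556} to each row. This is applicable because, on the algebraic side, every ring involved admits a countable approximate unit of idempotents by Lemma~\ref{lem:idempotents}, and on the analytic side the gauge-invariant ideals of $C^*(E)$ are themselves graph $C^*$-algebras (hence carry a canonical countable approximate unit of projections). Naturality of the resulting six-term exact sequence with respect to morphisms of extensions then supplies the upper half of each square, with both rows living in algebraic $K$-theory. The lower half is a ladder entirely in the category of $C^*$-algebras, comparing the algebraic six-term sequence of the $C^*$-algebra extension with the topological one via the natural transformation $K_n^{\alg} \to K_n^{\tops}$.

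The main obstacle is this final compatibility step. Functoriality of the comparison map $K_n^{\alg} \to K_n^{\tops}$ under $*$-homomorphisms is immediate from its construction, so the squares involving induced inclusions and quotients commute automatically. What requires care is that the comparison sends the algebraic boundary map produced by \cite[Lemma~3.10]{MR3188556} to the topological boundary map of the six-term sequence for the $C^*$-algebra extension. Both boundaries arise from the same mapping-cone construction once one uses the approximate unit of projections to split the extension at the level of $K$-theory, and under the identifications of Lemma~\ref{l:subquot} they are matched by the comparison. Granting this compatibility, pasting the two commuting ladders yields the full diagram.
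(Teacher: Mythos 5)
Your proposal is correct and follows essentially the same route as the paper: the vertical maps $\gamma_{n,\cdot}$ factor through $K_n^{\alg}$ of the $C^*$-subquotients, the upper ladder commutes by naturality of the algebraic $K$-theory sequence under the morphism of extensions induced by $\iota_E$, and the crux is compatibility of the comparison map $K_n^{\alg}\to K_n^{\tops}$ with the boundary maps. The paper disposes of exactly that last step (and of excision for the $C^*$-algebra extension) by citing \cite[Theorems~2.4.1 and~3.1.9]{MR2762555}, which is the precise reference you would want to close your ``granting this compatibility'' clause.
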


\begin{proof}
This follows Lemma~\ref{lem:opensets-ideals} and from \cite[Theorems~2.4.1 and~3.1.9]{MR2762555} . 
\end{proof}

%
%
\begin{lemma}\label{lem:induce-map-Ktheory}
Let $E$ be a graph.  Then for all $(H_1, S_1) , (H_2, S_2)$ admissible pairs with $( H_1, S_1 ) \leq ( H_2, S_2 )$, we have that 
\[
\gamma_{0, I_{(H_2, S_2) }^\tops / I_{ ( H_1, S_1) }^\tops } \colon K_0^\alg ( I_{(H_2, S_2)}^\alg / I_{ (H_1, S_1)}^\alg ) \rightarrow K_0^\tops ( I_{( H_2, S_2) }^\tops/ I_{ ( H_1, S_1) }^\tops )
\]
is an order isomorphism and 
\[
\gamma_{1, I_{(H_2, S_2) }^\tops / I_{ ( H_1, S_1) }^\tops } \colon K_1^\alg ( I_{(H_2, S_2)}^\alg / I_{ (H_1, S_1)}^\alg ) \rightarrow K_1^\tops( \mathfrak{I}_{( H_2, S_2) }/ \mathfrak{I}_{ ( H_1, S_1) } )
\]
is surjective with kernel a divisible group.

Suppose $F$ is a graph and suppose there exists an order isomorphism 
\[
\alpha_0 \colon  K_0^\alg( I_{(H_2, S_2)}^\alg / I_{ (H_1, S_1)}^\alg ) \rightarrow K_0^\alg ( I_{(H_2', S_2')}^\alg / I_{ (H_1', S_1')}^\alg )
\]
and there exists an isomorphism  
\[
\alpha_1 \colon  K_1^\alg ( I_{(H_2, S_2)}^\alg / I_{ (H_1, S_1)}^\alg ) \rightarrow K_1^\alg ( I_{(H_2', S_2')}^\alg / I_{ (H_1', S_1')}^\alg ),
\]
where $(H_i, S_i )$ is an admissible pair of $E$ for $i = 1, 2$ and $( H_i' , S_i')$ is an admissible pair of $F$ for $i = 1,2$ with $( H_1, S_1) \leq (H_2, S_2)$ and $(H_1' , S_1' ) \leq (H_2' , S_2' )$.  Then $\alpha_0$ and $\alpha_1$ induce isomorphisms 
\[
\widetilde{\alpha}_0 \colon  K_0^\tops ( I_{(H_2, S_2)}^\tops / I_{ (H_1, S_1)}^\tops ) \rightarrow K_0^\tops( I_{(H_2', S_2')}^\tops / I_{ (H_1', S_1')}^\tops )
\]
and 
\[
\widetilde{\alpha}_1 \colon  K_1^\tops ( I_{(H_2, S_2)}^\tops/ I_{ (H_1, S_1)}^\tops) \rightarrow K_1^\tops ( I_{(H_2', S_2')}^\tops / I_{ (H_1', S_1')}^\tops )
\]
such that $\widetilde{\alpha}_0$ is an order isomorphism and 
\[
\gamma_{i , I_{(H_2', S_2')}^\tops / I_{ (H_1', S_1')}^\tops } \circ \alpha_i = \widetilde{\alpha}_i \circ \gamma_{i,  I_{(H_2, S_2)}^\tops / I_{ (H_1, S_1)}^\tops }.
\]
\end{lemma}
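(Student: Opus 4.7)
The plan is to reduce the statement to the case of a single Leavitt path algebra compared with its associated graph $C^*$-algebra, and then use that for such a pair the comparison map between algebraic and topological $K$-theory has a uniform structure.

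First, I would argue that the subquotient $I_{(H_2,S_2)}^\alg / I_{(H_1,S_1)}^\alg$ is itself isomorphic to a Leavitt path algebra $L_\C(G)$ for an explicit graph $G$ built from $E$ and the two admissible pairs. For the quotient step one uses the standard identification of $L_\C(E) / I_{(H_1,S_1)}^\alg$ with the Leavitt path algebra of a quotient graph, and for the ideal step one uses \cite[Corollary~6.2]{MR3254772}. The same construction on the $C^*$-side gives $I_{(H_2,S_2)}^\tops / I_{(H_1,S_1)}^\tops \cong C^*(G)$, and both isomorphisms are compatible with the inclusion $L_\C(G) \hookrightarrow C^*(G)$. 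This reduces the first assertion to showing that for every graph $G$, the natural transformation $\gamma_{n,C^*(G)} \colon K_n^\alg(L_\C(G)) \to K_n^\tops(C^*(G))$ is an order isomorphism when $n=0$ and is surjective with divisible kernel when $n=1$. This comparison is classical: the Ara--Brustenga--Corti\~nas six-term exact sequence for the algebraic $K$-theory of $L_\C(G)$ and the Cuntz--Krieger six-term sequence for the topological $K$-theory of $C^*(G)$ differ only by replacing $K_1^\alg(\C)=\C^\times$ with $K_1^\tops(\C)=0$, and $\C^\times$ is divisible. A short diagram chase gives the $K_1$ claim; the $K_0$ claim (order isomorphism) follows by inspecting vertex projections, which generate the positive cone in both settings.

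For the second part, the $K_0$ case is immediate: both $\gamma_{0,\cdot}$-maps at the source and target are order isomorphisms by the first part, so I would set
\[
    \widetilde{\alpha}_0 := \gamma_{0, I_{(H_2',S_2')}^\tops / I_{(H_1',S_1')}^\tops} \circ \alpha_0 \circ \gamma_{0, I_{(H_2,S_2)}^\tops / I_{(H_1,S_1)}^\tops}^{-1},
\]
which is an order isomorphism by construction and satisfies the required compatibility tautologically. For $K_1$, the crucial observation is that the kernel of $\gamma_{1,\cdot}$ coincides with the \emph{maximal} divisible subgroup of $K_1^\alg(I_{(H_2,S_2)}^\alg / I_{(H_1,S_1)}^\alg)$. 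It is divisible by the first part; and it is maximal because the codomain $K_1^\tops(C^*(G))$ is a subgroup of a free abelian group (reducing to the row-finite case via Drinen--Tomforde desingularization and then reading off $K_1$ as the kernel of $I - A_G^t$ on $\bigoplus_{G^0}\Z$), so it contains no nonzero divisible subgroup. Hence any divisible subgroup of $K_1^\alg$ must be sent into $\ker \gamma_{1,\cdot}$ under $\gamma_{1,\cdot}$, which forces it into the kernel. Since the maximal divisible subgroup is a characteristic subgroup, $\alpha_1$ preserves it and descends to an isomorphism $\widetilde{\alpha}_1$ on the quotient, which by definition is the topological $K_1$-group; the required naturality $\gamma_{1,\cdot}\circ\alpha_1 = \widetilde{\alpha}_1\circ\gamma_{1,\cdot}$ is built into the construction.

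The main obstacle I expect is the identification of $\ker \gamma_{1,\cdot}$ as the maximal divisible subgroup for arbitrary subquotient graphs; this relies on a clean description of $K_1^\tops$ of a graph $C^*$-algebra as a torsion-free group with no divisible part, which in the non-row-finite case needs to be obtained via desingularization rather than by direct matrix calculation. The rest of the argument is a formal comparison of six-term sequences together with elementary abelian group theory.
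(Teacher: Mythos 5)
Your proposal is correct and follows essentially the same route as the paper: reduce the subquotient $I_{(H_2,S_2)}/I_{(H_1,S_1)}$ on both sides to a single graph algebra pair $L_\C(G)\subseteq C^*(G)$ via the structure theorems for graded/gauge-invariant ideals and quotients (checking compatibility with $\iota$), establish that $\gamma_0$ is an order isomorphism and $\gamma_1$ is surjective with divisible kernel, and then descend $\alpha_1$ using divisibility of the kernel against the fact that $K_1^\tops$ of a graph $C^*$-algebra is (free abelian, hence) reduced. The only difference is one of packaging: where the paper cites \cite[Corollary~3.5]{MR3310950} and \cite[Lemmas~4.7 and~4.8]{MR3188556}, you re-derive these inline from the comparison of the algebraic and topological six-term sequences, and your identification of $\ker\gamma_1$ as the \emph{maximal} divisible subgroup (hence characteristic) is an explicit form of the same descent argument.
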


\begin{proof}
Let $\iota_E \colon L_\C (E) \rightarrow C^*(E)$ be the $*$-homomorphism sending $v$ to $p_v$ and $e$ to $s_e$.  Note that for all admissible pairs $(H, S)$, $\iota_E ( I_{(H,S)}^\alg ) \subseteq I_{ (H,S) }^\tops$.  Therefore, for all admissible pairs $(H_1, S_1) , (H_2, S_2)$ with $( H_1, S_1 ) \leq ( H_2, S_2 )$, $\iota_E$ induces a $*$-homomorphism from $I_{(H_2, S_2)}^\alg / I_{ (H_1, S_1)}^\alg$ to $I_{(H_2, S_2)}^\tops / I_{ (H_1, S_1)}^\tops$.  We denote this map by $\iota_{E, I_{(H_2, S_2)}^\tops / I_{ (H_1, S_1)}^\tops }$.  Thus, the composition of this induced map in $K$-theory with the homomorphism from $K_n^\mathrm{alg} (I_{(H_2, S_2)}^\tops / I_{ (H_1, S_1)}^\tops )$ to $K_n^\tops (I_{(H_2, S_2)}^\tops / I_{ (H_1, S_1)}^\tops )$ is $\gamma_{n, I_{(H_2, S_2) }^\tops / I_{(H_1, S_1)}^\tops}$. 

We will show that it is enough to prove the first part of the lemma for the case $(H_2, S_2 ) = ( \emptyset, \emptyset)$ and $( H_1 ,S_1) = ( E^0 , \emptyset)$.  Let $(H, S)$ be an admissible pair.  Let $\overline{E}_{ ( H, S)}$ be the graph given in \cite[Definition~4.1]{MR3254772}.  By the proofs of \cite[Theorems~5.1 and~6.1]{MR3254772}, there exist $*$-isomorphisms
\[
\beta_{ (H, S) } \colon L_\C ( \overline{E}_{( H, S) } ) \rightarrow I_{ ( H, S) }^\alg \quad \text{and} \quad \lambda_{ ( H, S) } \colon  C^* ( \overline{E}_{( H, S) } ) \rightarrow I_{ ( H, S) }^\tops 
\]
given by 
\begin{align*}
\beta_{ ( H, S) } ( v) &:= \begin{cases} 
v & \text{ if $v \in H$} \\
v^H & \text{ if $v \in S$} \\
\alpha \alpha^* & \text{ if $v = \alpha \in F_1(H,S)$} \\
\alpha r(\alpha)^H \alpha^*& \text{ if $v = \alpha \in F_2(H,S)$} \\
\end{cases} \\
\beta_{(H, S)} (e) &:= \begin{cases} 
e & \text{ if $e \in E^1$} \\
\alpha & \text{ if $e = \overline{\alpha} \in \overline{F}_1(H,S)$} \\
\alpha r(\alpha)^H & \text{ if $e = \overline{\alpha} \in \overline{F}_2(H,S)$} \\
\end{cases} \\
\beta_{(H,S)}(e^*) &:= \begin{cases} 
e^* & \text{ if $e \in E^1$} \\
\alpha^* & \text{ if $e = \overline{\alpha} \in \overline{F}_1(H,S)$} \\
{r(\alpha)}^H \alpha^*  & \text{ if $e = \overline{\alpha} \in \overline{F}_2(H,S)$} \\
\end{cases} 
\end{align*}
and 
\begin{align*}
\lambda_{ ( H, S) } ( q_v) &:= \begin{cases} 
p_v & \text{ if $v \in H$} \\
p_v^H & \text{ if $v \in S$} \\
s_\alpha s_\alpha^* & \text{ if $v = \alpha \in F_1(H,S)$} \\
s_\alpha p_{r(\alpha)}^H s_\alpha^*& \text{ if $v = \alpha \in F_2(H,S)$} \\
\end{cases}\\
 \lambda_{(H, S)} (t_e) &:= \begin{cases} 
s_e & \text{ if $e \in E^1$} \\
s_\alpha & \text{ if $e = \overline{\alpha} \in \overline{F}_1(H,S)$} \\
s_\alpha p_{r(\alpha)}^H & \text{ if $e = \overline{\alpha} \in \overline{F}_2(H,S)$}. \\
\end{cases}
\end{align*}
Note that the diagram
\[
\xymatrix{
L_\C ( \overline{E}_{ (H,S)} ) \ar[rr]^-{ \iota_{ \overline{E}_{ (H,S)}  } } \ar[d]_{\beta_{(H,S)}} & & C^* ( \overline{E}_{ (H,S)}  ) \ar[d]^{\lambda_{ (H,S)}} \\
I_{ (H,S) }^\alg \ar[rr]_{\iota_{E, I_{ (H,S) }^\tops / 0 }  } & & I_{(H,S)}^\tops
}
\]
commutes.  
Therefore, for two admissible pairs $(H_1, S_1) , (H_2, S_2)$ with $( H_1, S_1 ) \leq ( H_2, S_2 )$, the diagram 
\[
\xymatrix{
L_\C ( \overline{E}_{ (H_2,S_2)} ) / \beta_{ (H_2, S_2)}^{-1} ( I_{(H_1,S_1)}^\alg ) \ar[rr]^-{ \overline{\iota}_{ \overline{E}_{ (H_2,S_2)} } } \ar[d]_{\overline{\beta}_{(H_2,S_2)}} & & C^* ( \overline{E}_{ (H_2,S_2)}  )/ \lambda_{ (H_2, S_2)}^{-1} ( I_{(H_1,S_1)}^\tops ) \ar[d]^{\overline{\lambda}_{ (H_2,S_2)}} \\
I_{ (H_2,S_2) }^\alg / I_{(H_1,S_1)}^\alg \ar[rr]_{\iota_{E, I_{ (H_2,S_2) }^\tops / I_{(H_1,S_1) }^\tops}  } & & I_{(H_2,S_2)}^\tops/ I_{(H_1,S_1)}^\tops
}
\]
where $\overline{\beta}_{(H_2,S_2)}$ and $\overline{\lambda}_{ (H_2,S_2)}$ are the induced $*$-isomorphisms on the quotient, commutes.  Therefore, it is enough to prove the lemma for the graph $\overline{E}_{( H_2, S_2) }$.  Hence, we may assume that $(H_2, S_2) = ( E^0, \emptyset)$. 

Set $(H_1, S_1) = (H,S)$ to simplify the notation.  Let $E \setminus (H,S)$ be the graph defined in  \cite[Theorem~5.7(2)]{MR2363133}.  Then by the proof of \cite[Theorem~5.7(2)]{MR2363133} and the discussion before \cite[Corollary~5.7]{MR1988256}, there are $*$-isomorphisms
\[
\delta_{ (H, S) } \colon L_\C ( E\setminus (H,S) ) \rightarrow L_\C (E) / I_{(H,S)} 
\]
and
\[
\eta_{ ( H, S) } \colon  C^* ( E\setminus (H,S) ) \rightarrow C^*(E) / \mathfrak{I}_{ ( H, S) } 
\]
such that the diagram
\[
\xymatrix{
L_{\C} ( E \setminus (H,S)  ) \ar[rr]^-{\delta_{(H,S)} } \ar[d]_{ \iota_{E \setminus (H,S) } } & & L_\C (E) / I_{(H,S)} \ar[d]^{ \iota_{E, C^*(E)/ \mathfrak{I}_{ (H,S)} } }\\
C^* ( E \setminus (H,S)  ) \ar[rr]_-{\eta_{(H,S)} } & & C^*(E) / \mathfrak{I}_{(H,S)} 
}
\]
commutes. 
Hence, it is enough to prove the lemma for the graph $E \setminus (H, S )$.  Hence, we may assume that $(H, S) = ( \emptyset, \emptyset)$.  Thus, proving the claim. 

The fact that $\gamma_{0, C^*(E) / 0 }$ is an isomorphism follows from \cite[Corollary~3.5]{MR3310950}.  To prove that $\gamma_{1, C^*(E) / 0 }$ is surjective and its kernel is a divisible group we reduce to the case that $E$ is row-finite.  Let $F$ be a Drinen-Tomforde desingularization of $E$ defined in \cite{MR2117597}.  Then there are embeddings $\omega \colon L_\C (E) \rightarrow L_\C (F)$ and $\rho \colon C^*(E) \rightarrow C^*(F)$ such that the diagram 
\[
\xymatrix{
L_\C(E) \ar[r]^{\omega} \ar[d]_{\iota_E} & L_\C(F) \ar[d]^{\iota_F} \\
C^*(E) \ar[r]_{\rho} & C^*(F) 
}
\]
commutes, $\omega( L_\C (E) )$ is a full corner of $L_\C(F)$, and $\rho ( C^*(E) )$ is a full corner of $C^*(F)$.  Hence, $\omega$ and $\rho$ induce isomorphisms in $K$-theory.  Therefore, it is enough to prove $\gamma_{1, C^*(E) , 0 }$ is surjective with kernel a divisible group for the case that $E$ is row-finite.  The row-finite case follows from \cite[Lemma~4.7]{MR3188556}.  The first part of the lemma now follows.

For the last part of the lemma, since $K_0 (\iota_{ E,  I_{(H_2, S_2)}^\tops / I_{ (H_1, S_1)}^\tops } )$ is an order isomorphism, it is clear that $\alpha_0$ induces an order isomorphism $\widetilde{\alpha}_0$ such that 
\[
\gamma_{0 , I_{(H_2', S_2')}^\tops / I_{ (H_1', S_1')}^\tops } \circ \alpha_0 = \widetilde{\alpha}_0 \circ \gamma_{0,  I_{(H_2, S_2)}^\tops / I_{ (H_1, S_1)}^\tops }
\]  
The fact that $\alpha_1$ induces an isomorphism $\widetilde{\alpha}_1$ such that $\gamma_{1, I_{(H_2', S_2')}^\tops / I_{ (H_1', S_1')}^\tops } \circ \alpha_1 = \widetilde{\alpha}_1 \circ \gamma_{1,  I_{(H_2, S_2)}^\tops / I_{ (H_1, S_1)}^\tops }$ is the result of the kernel of $\gamma_{1, I_{(H_2, S_2) }^\tops / I_{ ( H_1, S_1) }^\tops }$ being a divisible group and $K_1 ( I_{(H_2', S_2')}^\tops / I_{ (H_1', S_1')}^\tops )$ being torsion free, thus \cite[Lemma~4.8]{MR3188556} applies.
\end{proof}

\begin{theorem}\label{t:iso-filtered-kthy}
Let $E$ and $F$ be graphs.  
\begin{enumerate}
\item  Suppose $\mathsf{FK}^{\alg, +} ( \spec_\gamma (L_\C(E)) ; L_\C(E) ) \cong \mathsf{FK}^{\alg, +} ( \spec_\gamma (L_\C(F)) ; L_\C(F) )$.  Then $\mathsf{FK}^{\tops, +} ( \spec_\gamma (C^*(E)) ; C^*(E) ) \cong \mathsf{FK}^{\tops, +} ( \spec_\gamma (C^*(F)) ; C^*(F) )$.

\item  Suppose $| E^0 |, | F^0 | < \infty$.  If
\[
\theta \colon \mathsf{FK}^{\alg, +} ( \spec_\gamma (L_\C(E)) ; L_\C(E) )  \rightarrow \mathsf{FK}^{\alg, +} ( \spec_\gamma (L_\C(F)) ; L_\C(F) ) 
\]
is an isomorphism such that $\theta_0$ sends $[1_{L_\C(E)} ]_0 \in K_0^\alg( L_\C(E) )$ to $[1_{L_\C(F)} ]_0 \in K_0^\alg( L_\C(F) )$,
then there exists an isomorphism
\[
\Theta \colon \mathsf{FK}^{\tops, +} ( \prim_\gamma (C^*(E)) ; C^*(E) ) \rightarrow \mathsf{FK}^{\tops, +} ( \prim_\gamma (C^*(F)) ; C^*(F) ) 
\]
such that $\Theta_0$ sends $[1_{C^*(E)} ]_0 \in K_0^\tops( C^*(E) )$ to $[1_{C^*(F)} ]_0 \in K_0^\tops( C^*(F) )$.
\end{enumerate}
\end{theorem}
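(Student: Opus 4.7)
The plan is to transport the algebraic filtered K-theory isomorphism $\theta$ to a topological one, componentwise, by running Lemma~\ref{lem:induce-map-Ktheory} in parallel over every locally closed set. Let $\phi_{\alg} \colon \spec_\gamma(L_\C(E)) \to \spec_\gamma(L_\C(F))$ denote the homeomorphism underlying $\theta$. Composing with the two homeomorphisms of Theorem~\ref{t:specprim} yields a homeomorphism $\Phi \colon \prim_\gamma(C^*(E)) \to \prim_\gamma(C^*(F))$, and this will be the homeomorphism underlying $\Theta$. By Lemma~\ref{lem:opensets-ideals}, for every open $U \subseteq \spec_\gamma(L_\C(E))$ the algebras $L_\C(E)[U]$ and $C^*(E)[\Phi(U)]$ are labelled by the same admissible pair, and the same holds on the $F$ side, so each locally closed $Y \in \mathbb{LC}(\spec_\gamma(L_\C(E)))$ matches canonically a subquotient of graded ideals and of gauge invariant ideals on both graphs.

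Next, at each such $Y$ the order isomorphism $\alpha_{Y,0}$ on $K_0^{\alg}$ and the isomorphism $\alpha_{Y,1}$ on $K_1^{\alg}$ supplied by $\theta$ are fed into the second half of Lemma~\ref{lem:induce-map-Ktheory}, producing an order isomorphism $\widetilde{\alpha}_{Y,0}$ between the corresponding $K_0^{\tops}$ groups and an isomorphism $\widetilde{\alpha}_{Y,1}$ between the corresponding $K_1^{\tops}$ groups, both satisfying the intertwining $\widetilde{\alpha}_{Y,i} \circ \gamma_i = \gamma_i \circ \alpha_{Y,i}$. It remains to verify that the family $\{\widetilde{\alpha}_{Y,i}\}$ is compatible with the natural transformations $\iota_*, \pi_*, \partial_*$ of filtered topological K-theory. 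For every chain $U_1 \subseteq U_2 \subseteq U_3$ of open sets, Theorem~\ref{t:alg-top-K-theory} supplies a commutative ladder that compares the algebraic long exact sequence with the six-term sequence via the $\gamma$ maps on $E$ (and separately on $F$), while the hypothesis on $\theta$ gives commutativity of the corresponding algebraic ladder for the $\alpha$'s. Splicing these three commutative ladders with the intertwining identity shows that any target naturality square for the $\widetilde{\alpha}$'s commutes after pre-composition with $\gamma$. For the $K_0$ squares this closes the argument because $\gamma_0$ is an isomorphism; for the $K_1$ squares surjectivity of $\gamma_1$ suffices, as two homomorphisms out of $K_1^{\tops}$ agreeing after pre-composition with the surjection $\gamma_1$ from $K_1^{\alg}$ must coincide. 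This yields part (1).

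For part (2) the graphs are finite, so all four algebras are unital and the canonical inclusion $\iota_E \colon L_\C(E) \to C^*(E)$ is unital (it sends $\sum_{v \in E^0} v$ to $\sum_{v \in E^0} p_v = 1_{C^*(E)}$), hence $\gamma_{0, C^*(E)}[1_{L_\C(E)}]_0 = [1_{C^*(E)}]_0$, and similarly for $F$. Applying the intertwining identity at $Y = \spec_\gamma(L_\C(E))$ gives $\widetilde{\alpha}_0([1_{C^*(E)}]_0) = \gamma_0(\alpha_0([1_{L_\C(E)}]_0)) = \gamma_0([1_{L_\C(F)}]_0) = [1_{C^*(F)}]_0$, so $\Theta_0$ is unit-preserving. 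The main obstacle across the whole proof is the $K_1$ naturality step: because $\gamma_1$ is only surjective with divisible kernel, rather than bijective, one cannot directly transport algebraic commutativity to topological commutativity and must instead rely on the extension through the divisible kernel that is packaged into Lemma~\ref{lem:induce-map-Ktheory} (which itself invokes \cite[Lemma~4.8]{MR3188556}), together with surjectivity of $\gamma_1$ to reduce all $K_1$-level compatibility checks to the algebraic side.
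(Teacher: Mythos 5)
Your argument is correct and is essentially an expanded version of the paper's own proof, which simply cites Lemma~\ref{lem:opensets-ideals}, Lemma~\ref{lem:induce-map-Ktheory}, and Theorem~\ref{t:alg-top-K-theory} and leaves the assembly (including the use of surjectivity of $\gamma_1$ and bijectivity of $\gamma_0$ to transport naturality, and the unitality of $\iota_E$ for the class of the unit) implicit. No substantive difference in route.
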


\begin{proof}
The theorem follows from Lemmas~\ref{lem:opensets-ideals} and~\ref{lem:induce-map-Ktheory}, and Theorem~\ref{t:alg-top-K-theory}.
\end{proof}

\begin{corollary} \label{c:maincor}
Let $E$ and $F$ be graphs.  
\begin{enumerate}
\item If $L_\C(E)$ and $L_\C (F)$ are isomorphic as rings, then 
\[
\mathsf{FK}^{\tops, +} ( \prim_\gamma (C^*(E)) ; C^*(E) ) \cong \mathsf{FK}^{\tops, +} ( \prim_\gamma (C^*(F)) ; C^*(F) ).
\]  
If, in addition, $|E^0| , |F^0| < \infty$, then there exists an isomorphism 
\[
\Theta \colon \mathsf{FK}^{\tops, +} ( \prim_\gamma (C^*(E)) ; C^*(E) ) \rightarrow \mathsf{FK}^{\tops, +} ( \prim_\gamma (C^*(F)) ; C^*(F) )
\]
such that $\Theta_0$ sends $[1_{C^*(E)} ]_0 \in K_0^\tops( C^*(E) )$ to $[1_{C^*(F)} ]_0 \in K_0^\tops( C^*(F) )$.

\item If $L_\C(E)$ and $L_\C (F)$ are Morita equivalent, then 
\[
\mathsf{FK}^{\tops, +} ( \prim_\gamma (C^*(E)) ; C^*(E) ) \cong \mathsf{FK}^{\tops, +} ( \prim_\gamma (C^*(F)) ; C^*(F) ).
\]
\end{enumerate}
\end{corollary}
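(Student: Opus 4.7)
The plan is to derive both parts of the corollary from Theorem \ref{t:iso-filtered-kthy} by showing that, in each case, the hypothesis on the Leavitt path algebras produces an isomorphism $\mathsf{FK}^{\alg,+}(\spec_\gamma(L_\C(E));L_\C(E)) \cong \mathsf{FK}^{\alg,+}(\spec_\gamma(L_\C(F));L_\C(F))$ of filtered algebraic $K$-theory. The crucial input is Lemma \ref{l:gradeidempotents}, which characterizes graded ideals intrinsically as ideals generated by idempotents, so that both ring isomorphisms and Morita equivalences automatically respect the sublattice of graded ideals.

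For part (1), given a ring isomorphism $\psi \colon L_\C(E) \to L_\C(F)$, idempotents are preserved by $\psi$, so by Lemma \ref{l:gradeidempotents} the map $\psi$ restricts to a lattice isomorphism $\mathbb{I}_\gamma(L_\C(E)) \to \mathbb{I}_\gamma(L_\C(F))$. Via the construction of Section \ref{s:primespectrum} this dualizes to a homeomorphism $\psi_* \colon \spec_\gamma(L_\C(E)) \to \spec_\gamma(L_\C(F))$ (using Proposition \ref{p:intersections} to guarantee the hypotheses of Theorem \ref{t:latticeisofullring}). For each $Y \in \mathbb{LC}(\spec_\gamma(L_\C(E)))$, the isomorphism $\psi$ descends to a ring isomorphism $L_\C(E)[Y] \to L_\C(F)[\psi_*(Y)]$, inducing order-isomorphisms on $K_0^\alg$ and isomorphisms on all higher $K_n^\alg$ that commute with the boundary and inclusion maps of the six-term sequences (by naturality of algebraic $K$-theory). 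This supplies an isomorphism of $\mathsf{FK}^{\alg,+}$, and Theorem \ref{t:iso-filtered-kthy}(1) then yields the desired topological conclusion. If in addition $|E^0|,|F^0|<\infty$, the algebras are unital and $\psi(1_{L_\C(E)}) = 1_{L_\C(F)}$, so the isomorphism sends $[1_{L_\C(E)}]_0$ to $[1_{L_\C(F)}]_0$; applying Theorem \ref{t:iso-filtered-kthy}(2) gives the unit-preserving refinement.

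For part (2), a Morita equivalence of rings $L_\C(E) \sim L_\C(F)$ induces a lattice isomorphism of two-sided ideals that sends idempotent-generated ideals to idempotent-generated ideals (since Morita contexts preserve traces and full idempotents). Again by Lemma \ref{l:gradeidempotents} this restricts to a lattice isomorphism $\mathbb{I}_\gamma(L_\C(E)) \to \mathbb{I}_\gamma(L_\C(F))$, hence to a homeomorphism of the graded spectra. Morita equivalence also induces natural order-isomorphisms on $K_0^\alg$ and isomorphisms on $K_n^\alg$ for each $n$, and these pass to each subquotient $L_\C(E)[Y]$ (since the Morita bimodules tensor through subquotients by corresponding ideals). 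One obtains an isomorphism of $\mathsf{FK}^{\alg,+}$ to which Theorem \ref{t:iso-filtered-kthy}(1) applies. The main obstacle in both parts is really just careful bookkeeping: verifying that the induced morphisms on the subquotients $L_\C(E)[Y]$ assemble into a well-defined isomorphism of $\mathsf{FK}^{\alg,+}$ with all the naturality diagrams commuting. No deep technical hurdle appears, since the substantive work comparing algebraic and topological filtered $K$-theory has already been carried out in Theorem \ref{t:iso-filtered-kthy} and Lemma \ref{lem:induce-map-Ktheory}.
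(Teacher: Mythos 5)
Your treatment of part (1) is essentially the paper's argument: a ring isomorphism preserves idempotents, hence by Lemma~\ref{l:gradeidempotents} preserves graded ideals, and one feeds the resulting isomorphism of $\mathsf{FK}^{\alg,+}$ into Theorem~\ref{t:iso-filtered-kthy} (with the unit-class bookkeeping in the finite case). That part is fine.

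Part (2) is where you diverge from the paper, and where your argument has a genuine gap. The paper does \emph{not} push the Morita equivalence directly through the filtered $K$-theory machinery; it first converts it into a ring isomorphism by invoking Abrams--Tomforde: Morita equivalence of $L_\C(E)$ and $L_\C(F)$ gives $M_\infty(L_\C(E))\cong M_\infty(L_\C(F))$ as rings, and $M_\infty(L_\C(E))\cong L_\C(SE)$ for the stabilized graph $SE$. Since graded ideals of $L_\C(SE)$ are exactly the $M_\infty(I)$ for graded ideals $I$ of $L_\C(E)$ and algebraic $K$-theory is matrix-stable, $\mathsf{FK}^{\alg,+}$ is unchanged under stabilization, and part (1) applied to $SE$ and $SF$ finishes the proof. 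Your route instead asserts (a) that the ideal-lattice isomorphism induced by a Morita equivalence carries idempotent-generated ideals to idempotent-generated ideals, justified only by the phrase ``Morita contexts preserve traces and full idempotents,'' which is not an argument --- and note that for graphs with infinitely many vertices these rings are non-unital, so even the meaning and basic properties of the Morita equivalence require the careful set-up of \cite[Section~9]{MR2775826}; and (b) that the Morita equivalence induces isomorphisms on $K_n^{\alg}$ of every subquotient $L_\C(E)[Y]$ compatible with all the maps $\iota_*,\pi_*,\partial_*$, including the boundary maps of the long exact sequences. Claim (b) in particular is a substantial naturality statement for algebraic $K$-theory of non-unital rings that you do not establish; the whole point of the paper's detour through $M_\infty$ and the stabilized graphs is that it replaces both (a) and (b) by the already-proved ring-isomorphism case. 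To repair your argument you would either have to prove (a) and (b) directly, or do what the paper does and cite \cite[Corollary~9.11 and Proposition~9.8(2)]{MR2775826}.
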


\begin{proof}
1. follows from Lemma~\ref{l:gradeidempotents} and Theorem~\ref{t:iso-filtered-kthy}.

Suppose $L_\C(E)$ and $L_\C(F)$ are Morita equivalent.  Then by \cite[Corollary~9.11]{MR2775826}, $M_\infty ( L_\C(E) ) \cong M_\infty ( L_\C (F) )$ as rings.  By \cite[Proposition~9.8(2)]{MR2775826}, $M_\infty ( L_\C(E) ) \cong L_\C ( SE )$ and $M_\infty ( L_\C (F) ) \cong L_\C ( SF )$ as $\C$-algebras, where $SE$ and $SF$ are the stabilized graphs of $E$ and $F$ respectively (see \cite[Definition~9.4]{MR2775826}).  Note that every graded ideal $L_\C (SE)$ is of the from $M_\infty (I)$ for a unique graded ideal of $I$ of $L_\C (E)$ and every graded ideal of $L_\C (SF)$ is of the from $M_\infty (J)$ for a unique graded ideal $J$ of $L_\C (F)$.  We also have that 
\begin{align*}
\mathsf{FK}^{\alg, +} ( \spec_\gamma (L_\C(E)) ; L_\C(E) ) &\cong \mathsf{FK}^{\alg, +} ( \spec_\gamma (L_\C(SE)) ; L_\C(SE) ) \\
					&\cong \mathsf{FK}^{\alg, +} ( \spec_\gamma (L_\C(SF)) ; L_\C(SF) ) \\
					&\cong \mathsf{FK}^{\alg, +} ( \spec_\gamma (L_\C(F)) ; L_\C(F) ).
\end{align*}
Therefore, by Theorem~\ref{t:iso-filtered-kthy},
\[
\mathsf{FK}^{\tops, +} ( \prim_\gamma (C^*(E)) ; C^*(E) ) \cong \mathsf{FK}^{\tops, +} ( \prim_\gamma (C^*(F)) ; C^*(F) ).
\]
\end{proof}

\begin{corollary}
The Abrams-Tomforde conjecture holds for the class of finite graphs that satisfy Condition (H) of \cite[Definition 4.19]{arXiv:1604.05439v1}. 
In particular the Abrams-Tomforde conjecture holds for the class of finite graphs that satisfy Condition (K). 
\end{corollary}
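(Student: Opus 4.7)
The plan is to chain Corollary~\ref{c:maincor} with the classification theorem for finite graphs satisfying Condition (H) established in \cite{arXiv:1604.05439v1}. Suppose $E$ and $F$ are finite graphs satisfying Condition (H) and that $L_\C(E)$ and $L_\C(F)$ are Morita equivalent. First I would apply Corollary~\ref{c:maincor}(2) directly to obtain
\[
\mathsf{FK}^{\tops, +} ( \prim_\gamma (C^*(E)) ; C^*(E) ) \cong \mathsf{FK}^{\tops, +} ( \prim_\gamma (C^*(F)) ; C^*(F) ).
\]
Next, I would invoke the classification theorem from \cite{arXiv:1604.05439v1}, which was formulated precisely so that finite graphs satisfying Condition (H) have their $C^*$-algebras classified up to stable isomorphism by gauge invariant filtered topological $K$-theory. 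Applied to the isomorphism above, this yields $C^*(E) \otimes \K \cong C^*(F) \otimes \K$, and then the Brown--Green--Rieffel theorem on stable isomorphism of separable $C^*$-algebras produces the desired strong Morita equivalence between $C^*(E)$ and $C^*(F)$.

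For the second assertion, graphs satisfying Condition (K) form a subclass of those satisfying Condition (H), as is part of the setup in \cite{arXiv:1604.05439v1}, so the statement for Condition (K) is an immediate specialization of the first. Alternatively, one can derive it directly from the fact that under Condition (K) every ideal of $C^*(E)$ is gauge invariant and $C^*(E)$ has real rank zero, so gauge invariant filtered $K$-theory coincides with the full filtered $K$-theory; the result is then a consequence of \cite{MR3188556} together with Corollary~\ref{c:maincor}(2).

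The main obstacle, such as it is, lies entirely in the external classification theorem imported from \cite{arXiv:1604.05439v1}; within the present paper all of the work has already been done in Corollary~\ref{c:maincor}, which translated the algebraic Morita equivalence into an isomorphism of the correct $C^*$-algebraic filtered invariant. Consequently no further internal argument is required and the proof is a short deduction.
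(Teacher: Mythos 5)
Your proposal is correct and follows essentially the same route as the paper: the paper's proof is exactly the combination of Corollary~\ref{c:maincor} with the classification theorem \cite[Theorem 6.1]{arXiv:1604.05439v1}, plus the observation that Condition (K) implies Condition (H). Your extra unpacking (stable isomorphism followed by Brown--Green--Rieffel) and the alternative derivation of the Condition (K) case are fine but not needed beyond what the paper records.
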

\begin{proof}
The first part is just a combination of Corollary~\ref{c:maincor} and \cite[Theorem 6.1]{arXiv:1604.05439v1}.
Finally, all graphs that satisfy Condition (K) satisfy Condition (H).
\end{proof}

\section*{Acknowledgements}

This work was partially supported by the Danish National Research Foundation through the Centre for Symmetry and Deformation (DNRF92), by the VILLUM FONDEN through the network for Experimental Mathematics in Number Theory, Operator Algebras, and Topology, by a grant from the Simons Foundation (\# 279369 to Efren Ruiz), and by the Danish Council for Independent Research | Natural Sciences. 

\newcommand{\etalchar}[1]{$^{#1}$}

\end{document}